\date{\today}
\title{Maximal sets without choice}
\author{Jonathan Schilhan}
\email{j.schilhan@leeds.ac.uk}
\address{School of Mathematics, University of Leeds, Leeds, LS2 9JT, UK}
\thanks{The author was supported by a UKRI Future Leaders Fellowship MR/T021705/1}
    \DeclareMathOperator{\tp}{tp}
    \DeclareMathOperator{\dom}{dom}
    \DeclareMathOperator{\Coll}{Coll}
    \DeclareMathOperator{\cf}{cf}
    \DeclareMathOperator{\sym}{sym}
\newcommand{\axiom}[1]{\mathsf{#1}}
\newcommand{\ZFC}{\axiom{ZFC}}
\newcommand{\AC}{\axiom{AC}}
\newcommand{\DC}{\axiom{DC}}
\newcommand{\ZF}{\axiom{ZF}}
\newcommand{\HOD}{\mathrm{HOD}}
\newcommand{\GCH}{\axiom{GCH}}
\newcommand{\HS}{\axiom{HS}}
\newcommand{\WO}{\axiom{WO}}
\newcommand{\CUP}{\axiom{CU}(\mathbf\Sigma^1_\omega)}
\newcommand{\CU}{\axiom{CU}}
\newcommand{\SP}{\axiom{SP}}
\newcommand{\HYP}{\axiom{HYP}}
\newcommand{\sF}{\mathscr F}
\newcommand{\sG}{\mathscr G}
\theoremstyle{plain}
\newtheorem{thm}{Theorem}[section]
\newtheorem{lemma}[thm]{Lemma}
\newtheorem{prop}[thm]{Proposition}
\newtheorem{cor}[thm]{Corollary}
\theoremstyle{definition}
\newtheorem{definition}[thm]{Definition}
\newtheorem{remark}[thm]{Remark}
\newtheorem{quest}[thm]{Question}
\begin{document}

\maketitle

\begin{abstract}
We show that it is consistent relative to $\ZF$, that there is no well-ordering of $\mathbb{R}$ while a wide class of special sets of reals such as Hamel bases, transcendence bases, Vitali sets or Bernstein sets exists. To be more precise, we can assume that every projective hypergraph on $\mathbb{R}$ has a maximal independent set, among a few other things. For example, we get transversals for all projective equivalence relations. Moreover, this is possible while either $\DC_{\omega_1}$ holds, or countable choice for reals fails. Assuming the consistency of an inaccessible cardinal, ``projective" can even be replaced with ``$L(\mathbb{R})$". 
This vastly strengthens the consistency results obtained in \cite{brendle2018model}, \cite{Horowitz2020} or \cite{KanoveiSchindler}.
\end{abstract}


\section{Introduction}

Using the \emph{Axiom of Choice}, or more specifically, a \emph{well-ordering} of the continuum special sets of reals such as Hamel bases, transcendence bases, Vitali sets or two-point sets (also known as Mazurkiewicz sets) can be constructed. In many cases, it was unknown for a long time whether the existence of a well-ordering of $\mathbb{R}$ is necessary to have such sets. For example, it has only recently been shown in \cite{Beriashvili2018} that the existence of a Hamel basis is indeed consistent with the non-existence of a well-ordering. This answers a question of Pincus and P\v rikrý from the 70's (see \cite[p. 433]{Pincus1975}). In \cite{Horowitz2020}, the same is shown for transcendence bases, answering a question of Larson and Zapletal. In \cite{brendle2018model}, a model is constructed in which a Hamel basis, a Vitali set, a Bernstein, Luzin and Sierpi\' nski set exist all simultaneously while the continuum can't be well-ordered. Here, recall that a \emph{Bernstein set} is a set $X$ of reals such that neither $X$ nor its complement contain an uncountable closed subset. A \emph{Luzin set} is an uncountable set of reals that intersects every meager set in only a countable set and a \emph{Sierpi\' nski} set is uncountable and intersects every measure zero set in only countably many points. In \cite{KanoveiSchindler} moreover, the authors show that a $\Delta^1_3$-definable Hamel basis can exist while countable choice for reals fails.
Similar types of results are also obtained in \cite{Larson2020} or \cite{Larson2017} and it is fair to say, that interest in questions of this type has greatly increased in recent years. 

The goal of the following paper, is to present models that supersede the previously mentioned results and improve them to a great extent. Many of the examples that we mention above can be treated as \emph{maximal independent} sets in \emph{hypergraphs} on $\mathbb{R}$ or other suitable Polish spaces. A hypergraph on a set $X$ is a set $E$ of finite non-empty subsets of $X$. Then, $Y \subseteq X$ is called $E$-independent if no ``edge" in $E$ can be formed by elements of $Y$, i.e. $[Y]^{<\omega} \cap E = \emptyset$. Moreover, $Y$ is maximal $E$-independent if it can not be properly extended to an $E$-independent set. For example, when $X= \mathbb{R}$ and $E$ consists of finite linearly dependent subsets of $\mathbb{R}$ over $\mathbb{Q}$, a maximal independent set is a Hamel basis. When $E$ is an equivalence relation, a maximal $E$-independent set is also called a \emph{transversal} for $E$. So when $E$ consists of $\{x,y\} \subseteq \mathbb{R}$ such that $x \neq y$ and $x-y \in \mathbb{Q}$, we get Vitali sets. Similarly, we can frame many other examples such as ultrafilters or transcendence bases. 

One of the main results of \cite{Schilhan2020} is the following.  

\begin{thm}\label{thm:mainold}
Assume $V = L$. Let $\mathbb{P}$ be a countable support iteration or a finite product of Sacks forcing. Then, in $V^\mathbb{P}$, every analytic hypergraph has a $\mathbf\Delta^1_2$ definable maximal independent set. 
\end{thm}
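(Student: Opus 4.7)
The plan is to construct the maximal $E$-independent set $I$ by a greedy recursion along a canonical $\Sigma^1_2$ well-ordering of $\mathbb{R}$ in $V^{\mathbb{P}}$, and then to verify the $\mathbf{\Delta}^1_2$ bound by showing this recursion is locally computable inside countable transitive models. The first step is to exhibit a $\Sigma^1_2$ well-ordering $<^{*}$ of $\mathbb{R}$ in $V^{\mathbb{P}}$ (with a real parameter if needed). For the finite product this is essentially classical, since $V^{\mathbb{P}} = L[\vec g]$ for finitely many mutually generic Sacks reals. For the countable support iteration, one leverages that every real appears at a countable stage, together with a sufficiently uniform $L$-definition of iterated Sacks forcing, to produce a Jensen-style $\Sigma^1_2$ well-ordering.

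Fix an analytic hypergraph $E$ with Borel code $c$. I would then define $I$ by transfinite recursion along $<^{*}$: put $x$ into $I$ iff $\{x\} \cup \{y <^{*} x : y \in I\}$ is $E$-independent. Maximality is immediate from the construction; the real content lies entirely in the definability analysis.

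For the $\mathbf{\Delta}^1_2$ bound, I would aim to prove the equivalence that $x \in I$ iff there exists a countable transitive model $M$ containing $x$ and $c$, satisfying a sufficient fragment of $\ZFCst$, computing $<^{*}$ correctly on its reals, with $x \in I^{M}$. Since $<^{*}$ is $\Sigma^1_2$ with countable initial segments, the Borel code $c$ is absolute between well-founded models, and each predecessor set $\{y <^{*} x\}$ is $\mathbf{\Sigma}^1_1$ (hence absolute), the internal recursion defining $I^{M}$ up to $x$ agrees with the global recursion defining $I$ up to $x$. The symmetric version for $x \notin I$ has the same $\Sigma^1_2$ shape, yielding $\mathbf{\Delta}^1_2$.

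The main obstacle I anticipate is the absoluteness claim powering the last step: any two ``correct'' countable models $M, M'$ containing $x$ and $c$ must agree on whether $x \in I$. Here one must exploit specific features of Sacks forcing, most notably a fusion argument or the Sacks property, to ensure that every potential witness to a finite $E$-edge among the chosen predecessors already lives in any sufficiently rich $M$, and that no spurious witnesses appear in $M$ without materializing in $V^{\mathbb{P}}$. In the countable support iteration case this is particularly delicate, since one must track how partial generics introduced at different stages of the iteration interact with the greedy recursion; this is where the principal technical effort of the proof will lie.
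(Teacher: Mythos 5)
Your construction collapses at the very first step in the case that actually matters. For the countable support iteration of length $\omega_1$ (or $\omega_2$) over $L$, there is \emph{no} $\mathbf\Sigma^1_2$ well-ordering of $\mathbb{R}$ in $V^{\mathbb{P}}$: by Mansfield's theorem, a $\Sigma^1_2(a)$ well-ordering of the reals forces $\mathbb{R} \subseteq L[a]$, whereas in the iterated Sacks extension every real $a$ already appears in some proper initial segment $V[G_\alpha]$ and the Sacks real added at stage $\alpha$ lies outside $L[a]$. (This is not a peripheral case: the whole point of the theorem, as exploited in Corollary~\ref{cor:oldmain}, is that the construction survives into a model whose $L(\mathbb{R})$ has no well-ordering of the reals at all.) Even ignoring Mansfield, for $\lambda=\omega_2$ the continuum has size $\aleph_2$, so the $<^{*}$-predecessors of a real need not be captured by any countable model and your ``local computability'' verification of the $\mathbf\Delta^1_2$ bound breaks; that step really needs a $\Sigma^1_2$-\emph{good} well-ordering of length $\omega_1$, which is exactly what fails here. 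Your argument is essentially the classical Gödel-style greedy construction, and it does go through for the finite product (where $V^{\mathbb{P}}=L[\bar g]$ for a finite tuple of Sacks reals and the canonical $\Sigma^1_2(\bar g)$-good well-ordering is available), but that is the easy half of the statement.

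The actual proof runs the recursion in the ground model $L$ rather than along the reals of the extension. One enumerates in $L$, in order type $\omega_1$, all pairs consisting of a condition and a continuous function coding a name for a real, and recursively builds ground-model-coded compact sets $K_\alpha$ using the dichotomy of Proposition~\ref{prop:propheart}: below any condition one can find $\bar q$ and a compact $E$-independent $K$ with either $\bar q \Vdash \dot y \in K$ or $\bar q \Vdash \{\dot y\}\cup K$ not $E$-independent. The union $\bigcup_{\alpha<\omega_1}K_\alpha$ is then maximal $E$-independent in $V^{\mathbb{P}}$, and its $\mathbf\Delta^1_2$ definition comes from condensation applied to the $L$-definable sequence of codes for the $K_\alpha$, not from any well-ordering of the reals of the extension. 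If you want to salvage your plan, this is the substitution you need: replace the well-ordering of $\mathbb{R}^{V^{\mathbb{P}}}$ by a well-ordering of names, and replace the greedy membership test by the fusion/continuous-reading-of-names dichotomy above.
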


If $\mathbb{P}$ is the countable support iteration of Sacks forcing of length $\omega_1$, then it is well-known that in $L(\mathbb{R})$ of the forcing extension $V^\mathbb{P}$, there is no well-ordering of the reals and the \emph{Axiom of Dependent Choice}, which we denote by $\DC$, holds.\footnote{For a definition of $\DC$ see Definition~\ref{def:DC}.} Moreover, Luzin sets and Sierpi\' nski sets that exist in $L$ remain such sets by well-known preservation results such as can be found in \cite{BartoszynskiJudah1995}. Let us write $\WO(\mathbb{R})$ to say that the reals can be well-ordered. Then we immediately get the following corollary of Theorem~\ref{thm:mainold}.

\begin{cor}\label{cor:oldmain}
The following is consistent relative to $\ZF$: 
\begin{enumerate}
    \item $\DC + \neg \WO(\mathbb{R})$.
    \item Every analytic hypergraph on a Polish space has a ($\mathbf\Delta^1_2$ definable) maximal independent set.  
    \item There is a Luzin and a Sierpi\' nski set.
\end{enumerate}
\end{cor}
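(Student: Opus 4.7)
The strategy is to take the model $L(\mathbb{R})^{V^{\mathbb{P}}}$, where $V = L$ and $\mathbb{P}$ is the countable support iteration of Sacks forcing of length $\omega_1$. The point is that this inner model and $V^{\mathbb{P}}$ share the same reals, which will let the three clauses be verified independently by invoking, respectively, standard Sacks-model analysis, classical preservation theorems, and Theorem~\ref{thm:mainold}.

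For item (1), the facts that $\DC$ holds and that $\mathbb{R}$ cannot be well-ordered in $L(\mathbb{R})^{V^{\mathbb{P}}}$ are well-known features of the length-$\omega_1$ countable support Sacks iteration over $L$; I would simply cite the classical references. For item (3), in $L$ we have $\ZFC + \CH$, so a standard $\omega_1$-length recursive diagonalization against meager and null sets produces a Luzin set and a Sierpi\'nski set. The countable support Sacks iteration is $\omega^{\omega}$-bounding and adds neither Cohen nor random reals; by the preservation theorems for non-meagerness and non-nullness in \cite{BartoszynskiJudah1995}, these ground model sets remain Luzin and Sierpi\'nski in $V^{\mathbb{P}}$, and therefore also in $L(\mathbb{R})^{V^{\mathbb{P}}}$, since both properties are first order over the reals and the reals are the same.

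For item (2), Theorem~\ref{thm:mainold} supplies, for each analytic hypergraph $E$ in $V^{\mathbb{P}}$, a $\mathbf\Delta^1_2$-definable maximal $E$-independent set $A$. Since $\mathbf\Delta^1_2$ sets are definable from a real parameter, $A \in L(\mathbb{R})^{V^{\mathbb{P}}}$. Independence is absolute between models with the same reals, and the maximality statement---``for every real $x \notin A$, some finite subset of $A \cup \{x\}$ containing $x$ lies in $E$''---quantifies only over reals, so it too is preserved in passing from $V^{\mathbb{P}}$ down to $L(\mathbb{R})^{V^{\mathbb{P}}}$. Hence $A$ witnesses (2) in the inner model as well.

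The essentially only technical point is this last transfer of maximality from $V^{\mathbb{P}}$ into its $L(\mathbb{R})$, and it dissolves once one notes that the two models have the same real parts; everything else is a direct invocation of either Theorem~\ref{thm:mainold} or a standard fact about the countable support Sacks iteration.
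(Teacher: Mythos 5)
Your proposal is correct and follows essentially the same route as the paper: the paper likewise passes to $L(\mathbb{R})$ of the length-$\omega_1$ countable support Sacks iteration over $L$, cites the well-known facts for $\DC + \neg\WO(\mathbb{R})$ and the preservation theorems of \cite{BartoszynskiJudah1995} for the Luzin and Sierpi\'nski sets, and transfers the $\mathbf\Delta^1_2$ maximal independent sets from Theorem~\ref{thm:mainold} into the inner model using that the two models have the same reals. Your explicit remark that independence and maximality are statements quantifying only over reals is exactly the (unstated) reason the paper calls the corollary immediate.
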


In particular, we consistently also get ultrafilters, Hamel bases, transcendence bases, Vitali sets and mad families, just to name a few examples, all simultaneously without a well-ordering of $\mathbb{R}$. 
This already improves most of the results we mentioned previously.

We will extend Corollary~\ref{cor:oldmain} in a few directions. First, we may expand the class of hypergraphs to include all projective sets. Crucially, this will use a result by Shelah that a certain projective uniformization priciple relative to comeager sets is consistent relative to $\ZFC$. Moreover, assuming the consistency of an inaccessible cardinal we can consider all hypergraphs in $L(\mathbb{R})$. Secondly, we will show that $\AC_\omega(\mathbb{R})$, countable choice for reals (see Definition~\ref{def:ACX}), may fail. $\WO(\mathbb{R})$ can thus fail in a particularily strong way. On the other hand, we can also obtain a model in which $\DC_{\omega_1}$ holds. Our main result is the following: 

\begin{thm}\label{thm:main1}
The conjunction of the following is consistent with $\DC_{\omega_1}$ as well as with $\neg \AC_{\omega}(\mathbb{R})$ relative to $\ZF$: 
\begin{enumerate}
    \item There is no well-ordering of the continuum. 
    \item Every projective hypergraph on the reals has a maximal independent set that is a union of $\aleph_1$-many compact sets.
    \item In particular, there is a Hamel basis, a transcendence basis, a Vitali set, etc.
    \item Every projective set has the Baire property. 
    \item There is a two-point set, a tower, a scale, a P-point, a Bernstein set, a Luzin set and a Sierpiński set. 
\end{enumerate}

Assuming an inaccesible cardinal, we can replace ``projective" by ``$L(\mathbb{R})$" and add: 

\begin{enumerate}
\setcounter{enumi}{5}
    \item Every equivalence relation in $L(\mathbb{R})$ has a transversal. 
\end{enumerate}
\end{thm}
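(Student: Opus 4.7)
The plan is to combine three ingredients suggested by the introduction: (i) a base model, obtained via Shelah's consistency result, in which $\ZFC$ holds together with a strong form of projective uniformization modulo comeager sets; (ii) a countable support iteration of Sacks forcing over this base, used to produce, for each projective hypergraph, a maximal independent set stratified into $\aleph_1$ compact pieces; and (iii) a passage to a suitable submodel that destroys $\WO(\mathbb{R})$ but preserves these stratified maximal independent sets. The $\DC_{\omega_1}$ variant and the $\neg\AC_\omega(\mathbb{R})$ variant will differ only in step (iii).

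For the base model, I would start in $L$ and apply Shelah's forcing construction to obtain a $\ZFC$-model $V_0$ in which every projective set has the Baire property and, more strongly, every projective binary relation with comeager projection can be uniformized by a projective partial function on a comeager set. For the stronger, inaccessible version of the theorem, I would instead work over a ground model with an inaccessible $\kappa$, apply a suitable $\Coll(\omega,<\kappa)$-style preparation, and note that the resulting $L(\mathbb{R})$ satisfies the analogous uniformization properties for all of its sets. In either case, $\CH$ in the base lets me fix in advance a P-point, a tower, a scale, a two-point set, and a Bernstein, Luzin and Sierpi\'nski set.

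Next, I would perform over $V_0$ a countable support iteration $\mathbb{P}$ of Sacks forcing of length $\omega_1$. Standard preservation theorems for proper $\omega^\omega$-bounding iterations show that Shelah's uniformization principle, the Baire property for projective sets, and the prescribed Luzin, Sierpi\'nski and Bernstein sets all persist into $V_0^{\mathbb{P}}$. The central step is to lift the fusion argument of Theorem~\ref{thm:mainold} from analytic to projective hypergraphs: given a projective hypergraph $E$, one builds stage by stage a sequence of Sacks conditions whose generic branches together form an $E$-maximal independent set, where the choices that in the analytic case were justified by $\mathbf\Delta^1_2$-absoluteness are now justified by applying Shelah's uniformization to the projective relation coding ``edge-avoiding Sacks condition.'' The resulting set is a union of $\aleph_1$ compact pieces, one per iteration stage. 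Finally, to kill $\WO(\mathbb{R})$ while retaining this set, pass either to $L(\mathbb{R})$ of $V_0^{\mathbb{P}}$ (yielding the $\DC_{\omega_1}$ variant) or to a symmetric submodel obtained from a group action on the $\omega_1$ many iteration indices (yielding the $\neg\AC_\omega(\mathbb{R})$ variant); because the maximal independent set is canonically assembled from $\omega_1$ named compact sets, it survives both passages.

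The main obstacle is step (iii) in the $\neg\AC_\omega(\mathbb{R})$ case: the symmetry group and normal filter on $\mathbb{P}$ must be calibrated so that an $\omega_1$-sequence of symbolically named compact sets is admitted as symmetric, while no real-enumeration of the continuum is, and while the preservation results for Luzin, Sierpi\'nski, Bernstein, P-point, tower, scale, and two-point set all still apply inside the submodel. The delicate point is that the fusion in step (ii) produces a hereditarily symmetric name for the maximal independent set only if the enumeration of projective hypergraphs it follows is itself symmetric; once the symmetry setup is chosen so as to make this compatible with the failure of $\AC_\omega(\mathbb{R})$, the rest of the argument follows from the established preservation theorems.
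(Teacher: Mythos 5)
Your ingredients (i) and (ii) match the paper's strategy: start from a model of Shelah's comeager uniformization principle $\CUP$, iterate Sacks forcing with countable support, and assemble, for each projective hypergraph, a maximal independent set as a union of $\aleph_1$ compact pieces indexed by an enumeration of (condition, continuous function) pairs. But two things go wrong. First, you never isolate the actual technical engine: the paper does not try to \emph{preserve} $\CUP$ into the extension (this is not a ``standard preservation theorem'' and is not needed); rather it uses $\CUP$ in the ground model to prove \emph{projective generic absoluteness} for the Sacks iteration (Lemma~\ref{lem:absproj}, via the continuous reading of names and mutual Cohen genericity), and this absoluteness is what justifies both the recursive choice of the compact sets $K_\alpha$ (Proposition~\ref{prop:propheart2}) and the verification of maximality in the extension. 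Without that lemma your ``fusion argument lifted from analytic to projective'' has no justification for the limit/verification steps.

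Second, and more seriously, your step (iii) is wrong in both cases. For the $\DC_{\omega_1}$ variant, passing to $L(\mathbb{R})$ of the extension only yields $\DC$, not $\DC_{\omega_1}$; and the objects you constructed --- unions of $\aleph_1$ ground-model compacta, a P-point, a Bernstein set --- are not definable from a real and ordinals, so there is no reason they belong to $L(\mathbb{R})^{V_0^{\mathbb{P}}}$ at all (in Corollary~\ref{cor:oldmain} they survive into $L(\mathbb{R})$ only because they are $\mathbf\Delta^1_2$, which fails here). The paper instead takes a symmetric extension $M$ of the length-$\omega_2$ iteration using \emph{coherent} automorphisms and the filter generated by the groups $H_\alpha$ fixing $\mathbb{S}^{*\alpha}$; this $M$ contains $V$ (hence the sequence $\langle K_\alpha : \alpha<\omega_1\rangle$) and is closed under $\omega_1$-sequences, which is what gives $\DC_{\omega_1}$. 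For the $\neg\AC_\omega(\mathbb{R})$ variant, a ``group action on the $\omega_1$ many iteration indices'' does not produce automorphisms of a countable support \emph{iteration} (later coordinates are names over earlier ones), and with iteration length $\omega_1$ the resulting symmetric model would in any case be countably closed, so $\AC_\omega(\mathbb{R})$ would hold. The paper takes the iteration of length $\omega$ with the same coherent-automorphism system; then every set of ordinals in $M$ lies in some $V[\langle x_\beta : \beta<n\rangle]$, so no choice function through $\langle R_{n+1}\setminus R_n : n<\omega\rangle$ exists, which is exactly the failure of $\AC_\omega(\mathbb{R})$. You correctly flag the calibration of the symmetry system as the main obstacle, but the resolution requires a different system and a different iteration length than the ones you propose.
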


In yet another direction, we seek to improve the results of Kanovei and Schindler from \cite{KanoveiSchindler} by lowering the complexity of a $\Delta^1_3$ Hamel basis to $\Delta^1_2$. Unfortunately, while in the model of \cite{KanoveiSchindler} there is a $\Delta^1_3$ Bernstein set, we only find a $\Delta^1_4$ such set. 

\begin{thm}\label{thm:main2}
The conjunction of the following is consistent with $\ZF + \neg\AC_\omega(\mathbb{R})$ as well as with $\ZF + \DC + \neg\WO(\mathbb{R})$.
\begin{enumerate}
    \item Every $\mathbf\Sigma^1_1$ hypergraph has a $\mathbf\Delta^1_2$ maximal independent set.
    \item In particular, there is a $\Delta^1_2$ Hamel basis, transcendence basis, Vitali set, etc. 
    \item There is a $\Pi^1_1$ tower and scale.
    \item There is a $\Delta^1_2$ Luzin set, Sierpiński set and P-point.
    \item There is a $\Delta^1_4$ Bernstein set.
    \end{enumerate}
\end{thm}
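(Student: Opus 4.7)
The plan is to construct, on top of $L$, two models — one for each of the target theories $\ZF+\DC+\neg\WO(\mathbb R)$ and $\ZF+\neg\AC_\omega(\mathbb R)$ — using iterations or symmetric extensions of Sacks forcing, and to run a lightface refinement of Theorem~\ref{thm:mainold} in both. The $\DC$ variant is obtained as in Corollary~\ref{cor:oldmain}, by looking inside $L(\mathbb R)$ of the countable-support iteration of Sacks forcing of length $\omega_1$ over $L$. The $\neg\AC_\omega(\mathbb R)$ variant is instead obtained by replacing the Jensen-style minimal forcing of \cite{KanoveiSchindler} with a symmetric version of a Sacks product (or an $\omega$-length anonymous Sacks iteration), designed so that the resulting model contains a sequence of reals with no choice function while the underlying iteration/product remains canonically $\Sigma_1$-definable over a fixed level of $L$.

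The first key step is to strengthen Theorem~\ref{thm:mainold} from boldface $\mathbf\Delta^1_2$ to lightface $\Delta^1_2$ for $\mathbf\Sigma^1_1$ hypergraphs. The only genuine parameter in the construction from \cite{Schilhan2020} is the analytic code of the hypergraph, since the Sacks iteration itself has a canonical description in $L$. I would check that the maximal independent set can be written as $\{x:\varphi(x,e)\}$ for a $\Sigma^1_2$ formula $\varphi$ uniform in the analytic code $e$, and that both membership and non-membership in it are captured at the $\Sigma^1_2$ level, yielding items (1) and (2). For items (3) and (4), one runs the classical $L$-constructions of a $\Pi^1_1$ tower and scale and of a $\Sigma^1_2$ Luzin set, Sierpiński set, and P-point, and then invokes the preservation results for Sacks forcing recorded in \cite{BartoszynskiJudah1995} to keep each of these objects at the target complexity in the extension. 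The only extra care needed in the $\neg\AC_\omega(\mathbb R)$ variant is that the symmetry group be chosen to fix the canonical $L$-parameters used in these constructions.

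The main obstacle is item (5), the $\Delta^1_4$ Bernstein set, combined with the requirement that items (1)--(4) retain their $\Delta^1_2$ complexity inside the symmetric submodel used for $\neg\AC_\omega(\mathbb R)$. In the Kanovei--Schindler model a $\Sigma^1_3$ well-order of $\mathbb R$ is available, which yields a $\Delta^1_3$ Bernstein set by diagonal enumeration of perfect-set codes; replacing Jensen forcing by Sacks forcing costs one projective level in the well-order, so that the best bound I would expect here is $\Delta^1_4$. I would carry out the construction by enumerating perfect-set codes along the canonical $\Sigma^1_2$ scaffolding of the Sacks iteration and, at each stage, picking two reals in the given perfect set in a uniform $\Sigma^1_3$ fashion. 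Verification of the Bernstein property then reduces to a Shoenfield-style absoluteness calculation between the symmetric model and its ambient generic extension. The most delicate point of the whole argument is arranging that the symmetry machinery used to produce $\neg\AC_\omega(\mathbb R)$ does not raise the complexity of the MIS of Theorem~\ref{thm:mainold} — I expect this to be resolvable by taking the symmetry group to act trivially on a $\Sigma^1_2$-canonical skeleton of the iteration, so that the relevant $\Sigma^1_2$ definitions descend from the full extension to the symmetric submodel.
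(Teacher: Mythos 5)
Your overall architecture matches the paper's: both models are symmetric extensions of $L$ by the countable support Sacks iteration (length $\omega_1$ for the $\DC+\neg\WO(\mathbb{R})$ variant, length $\omega$ for the $\neg\AC_\omega(\mathbb{R})$ variant); items (1)--(2) come from the lightface form of \cite[Theorem 5.1]{Schilhan2020} together with Shoenfield absoluteness between the symmetric model and the ambient extension (needed when $\cf(\lambda)=\omega$, since the symmetric model then misses the reals added at the limit stage); and (3)--(4) are preservation of objects constructed in $L$. Up to that point your plan is essentially the paper's.

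The genuine gap is item (5). Your construction of the Bernstein set is the classical diagonal one: enumerate perfect-set codes along a (projectively definable) well-order and pick two points at each stage. But in both target models there is provably \emph{no} well-ordering of $\mathbb{R}$ at all --- this is the content of $\neg\WO(\mathbb{R})$ in one case and follows from $\neg\AC_\omega(\mathbb{R})$ in the other --- so no $\Sigma^1_3$, $\Sigma^1_4$, or any other well-order of the reals is available and the transfinite recursion cannot be run. (Your premise that the Kanovei--Schindler model carries a $\Sigma^1_3$ well-order is false for the same reason: $\AC_\omega(\mathbb{R})$ fails there.) Even a ``uniform $\Sigma^1_3$ choice of two points in each perfect set'' would not suffice, since without a recursion one cannot ensure that the chosen ``in'' and ``out'' points are assigned consistently across the continuum many perfect sets. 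The paper's Theorem~\ref{thm:bernstein} avoids choice entirely: in the symmetric extension of $L$ the constructibility degrees $\langle d_\alpha : \alpha<\kappa\rangle$ under $<_L$ form a well-ordered sequence of limit order type, every perfect tree is coded in some degree and its set of branches acquires new elements in every later degree, so the canonical set $B=\bigcup_{\alpha<\kappa} d_{2\alpha}$ is Bernstein. The $\Delta^1_4$ bound then comes not from ``losing a level in a well-order'' but from the cost of expressing ``$y$ lies in an even-indexed $L$-degree'' via a countable sequence of reals enumerating all degrees below it, with $\leq_L$ being $\Sigma^1_2$. You would need to replace your argument for item (5) with a definable-partition argument of this kind.
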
 

The models are obtained using the method of symmetric extensions applied to iterated Sacks forcing (see Section~\ref{sec:symit}). Recall that a \emph{tower} is a $\subseteq^*$ decreasing sequence without a lower bound. A \emph{scale} is a $<^*$ increasing sequence that is dominating, i.e. for every $g \in \omega^\omega$ it contains $f$ such that $g <^* f$. As usual, for $x, y \in [\omega]^\omega$, $f,g \in \omega^\omega$, $x \subseteq^* y$ iff $x \setminus y$ is finite and $g <^* f$ iff $f(n) > g(n)$, for all but finitely many $n \in \omega$. A P-point is an ultrafilter $\mathcal{U}$ on $\omega$ such that for any countable $A \subseteq \mathcal{U}$, there is $x \in \mathcal{U}$ with $x \subseteq^* y$, for every $y \in A$.

\section{Preliminaries}

\subsection{Weak choice principles}

\begin{definition}\label{def:DC}
$\DC$ is the statement that every tree without maximal nodes has an infinite branch. 

More generally, for an ordinal $\lambda$, $\DC_\lambda$ is the statement that every $\lambda$-closed tree $T$ has a chain of length $\lambda$. We say that a tree $T$ is $\lambda$-closed if every chain of length $< \lambda$ in $T$ has a least upper bound.

We write $\DC_{<\lambda}$ for the statement that $\DC_\gamma$ holds for every $\gamma < \lambda$. 
\end{definition}

\begin{definition}\label{def:ACX}
$\AC_X(Y)$, for sets $X$ and $Y$, is the statement that every family $\langle A_x : x \in X \rangle$ of non-empty subsets of $Y$ has a choice function, i.e. a function $f$ with domain $X$, such that $f(x) \in A_x,$ for every $x \in X$.
\end{definition}

\subsection{Iterations of Sacks forcing}

\begin{definition}[Sacks forcing]
Sacks forcing $\mathbb{S}$ consists of perfect subtrees of $2^{<\omega}$ ordered by inclusion. 
\end{definition}

Sacks forcing as well as its finite powers have the continuous reading of names, a useful property that lets us examine forcing properties topologically. It provides a link between the descriptive set theory of perfect sets and the structure of the reals in the forcing extension. 

\begin{lemma}[Folklore, also see \cite{Geschke2004}]\label{lem:crnfinite} Let $k \in \omega$. Then $\mathbb{S}^k$ is proper and for every $\mathbb{S}^k$-name $\dot y$ for a real and any $\bar p \in \mathbb{S}^k$, there is $\bar q \leq \bar p$ and a continuous function $f$ from $[\bar q] := \prod_{l<k} [q(l)]$ to $\omega^\omega$, such that $$\bar q \Vdash \dot y = f(\bar x_{\operatorname{gen}}),$$ where $\bar x_{\operatorname{gen}}$ is a name for the generic element of $(2^\omega)^k$ added by $\mathbb{S}^k$.
\end{lemma}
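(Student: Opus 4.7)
The plan is to verify both assertions by a standard Sacks fusion adapted to the finite product. Properness of $\mathbb S^k$ follows from the fact that $\mathbb S$ is proper (it satisfies Axiom A) and that finite products of proper forcings are proper; alternatively, the fusion construction below shows $\mathbb S^k$ itself satisfies Axiom A. The substantive content is therefore the continuous reading of names, which I would establish as follows. For $p\in\mathbb S$ and $n\in\omega$, write $\mathrm{Split}_n(p)$ for the set of $2^n$ nodes on the $n$-th splitting level of $p$, and for $\bar p\in\mathbb S^k$ set $\mathrm{Split}_n(\bar p) := \prod_{l<k}\mathrm{Split}_n(p(l))$. Given $\bar s\in\mathrm{Split}_n(\bar p)$, let $\bar p\wedge\bar s$ denote the componentwise restriction of $\bar p$ to the subtrees of $p(l)$ above $s(l)$. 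Write $\bar q\leq_n\bar p$ to mean that $\bar q\leq\bar p$ and $\mathrm{Split}_m(\bar q(l))=\mathrm{Split}_m(\bar p(l))$ for all $m\leq n$ and $l<k$.

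Starting with $\bar p_0 := \bar p$, I construct inductively a fusion sequence $\bar p_0\geq_0\bar p_1\geq_1\bar p_2\geq_2\cdots$ such that for every $n$ and every $\bar s\in\mathrm{Split}_n(\bar p_n)$, the condition $\bar p_n\wedge\bar s$ decides $\dot y\!\upharpoonright\! n$. At step $n+1$ I enumerate the finitely many $\bar s\in\mathrm{Split}_{n+1}(\bar p_n)$ and refine $\bar p_n\wedge\bar s$ one tuple at a time to a stronger condition deciding $\dot y\!\upharpoonright\!(n+1)$; since each such refinement only alters the trees strictly above level $n+1$, they can be glued back in to produce $\bar p_{n+1}\leq_{n+1}\bar p_n$ with the required property. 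Setting $\bar q(l) := \bigcap_n\bar p_n(l)$, the fusion condition guarantees that $\bar q(l)$ is a perfect tree whose $n$-th splitting level coincides with that of $\bar p_n(l)$, so $\bar q\in\mathbb S^k$ and $\bar q\leq\bar p$.

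Finally, I define $f:[\bar q]\to\omega^\omega$ by the rule that, for $\bar x\in[\bar q]$, letting $\bar s_n(\bar x)\in\mathrm{Split}_n(\bar q)$ be the unique tuple through which $\bar x$ passes on the $n$-th splitting level of $\bar q$, $f(\bar x)\!\upharpoonright\! n$ is the value decided by $\bar p_n\wedge\bar s_n(\bar x)$ for $\dot y\!\upharpoonright\! n$. These values cohere into a single element of $\omega^\omega$, and since $\bar s_n(\bar x)$ depends only on finitely many coordinates of $\bar x$, $f$ is continuous. That $\bar q\Vdash\dot y = f(\bar x_{\mathrm{gen}})$ is immediate from the fusion property. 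The only real difficulty is the bookkeeping in the gluing step of the inductive construction, where one must perform the finitely many local refinements sequentially while keeping the lower splitting levels frozen; this is entirely standard for Sacks-type forcings and no conceptual obstacle arises.
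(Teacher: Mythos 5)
The paper states this lemma as folklore with no proof, deferring to the cited reference of Geschke--Quickert, and your fusion argument is exactly the standard proof intended there: properness via Axiom A, a fusion sequence deciding $\dot y\restriction n$ on each tuple of $n$-th splitting nodes, and the locally constant (hence continuous) reading of $\dot y$ from the decided values. Your treatment of the gluing step (amalgamating the sequential refinements above the $(n+1)$-st splitting level while freezing the lower levels, so that later amalgamations only further shrink and hence preserve earlier decisions) is the correct standard device, so the argument goes through.
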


If $\lambda$ is an ordinal, we denote with $\mathbb{S}^{*\lambda}$ the countable support iteration of $\mathbb{S}$ of length $\lambda$. In the case of $\mathbb{S}^{*\lambda}$ an appropriate version of the continuous reading of names is less clear since there is no immediate topological representation for its conditions. There is a way to consider conditions topologically and prove a version of continuous reading names though. Various approaches to this exist in the literature (e.g. in \cite{Geschke2004}, \cite{Kanovei1976} or \cite{Schrittesser2016})~and we present here a condensed version that we think is most accessible and suitable for our purposes. It is basically a reformulation of \cite[Lemma~2.3--2.5]{Schilhan2020}.\footnote{Particularly similar to our approach is the theory of ``normal sets" as presented in \cite{Budinas1983} or \cite{Kanovei1976}.}

Consider the forcing $\tilde{\mathbb{S}}^{*\lambda}$ consisting of conditions $(X,C)$, where $C \in [\lambda]^{\aleph_0}$ and $X$ is a non-empty closed subset of $(2^\omega)^C$ such that for every $\alpha \in C$ and $\bar x \in X \restriction \alpha := \{ \bar z \restriction \alpha : \bar z \in X \}$, $$T_{\bar x} := \{s \in 2^{<\omega} : \exists y\exists \bar z (s \subseteq y \wedge \bar x^\frown y^\frown \bar z \in X)\} \in \mathbb{S}$$ and the function from $X \restriction \alpha$ to $\mathcal{P}(2^{<\omega})$ mapping $\bar x$ to $T_{\bar x}$ is continuous. The order on $\tilde{\mathbb{S}}^{*\lambda}$ is $(Y,D) \leq (X,C)$ iff $C \subseteq D$ and $Y \restriction C \subseteq X$. Let $i_\lambda \colon \tilde{\mathbb{S}}^{*\lambda} \to \mathbb{S}^{*\lambda}$ be such that $i_\lambda(X,C) = \bar q$, where for each $\alpha < \lambda$, $q(\alpha)$ is a $\mathbb{S}^{*\alpha}$-name for $T_{\bar x_{\operatorname{gen}} \restriction (C \cap \alpha)}$ if $\alpha \in C$, where $\bar x_{\operatorname{gen}}$ is a name for the generic sequence of reals added by $\mathbb{S}^{*\lambda}$, or $q(\alpha)$ is a $\mathbb{S}^{*\alpha}$-name for $2^{<\omega} = \mathbbm{1}_{\mathbb{S}}$ if $\alpha \notin C$. Then we have the following: 

\begin{lemma}\label{lem:crninfinite}
For any $\lambda$, $i_\lambda$ is a dense embedding. Thus $\mathbb{S}^{*\lambda}$ is forcing equivalent to $\tilde{\mathbb{S}}^{*\lambda}$. Moreover, whenever $\dot y$ is a ${\mathbb{S}}^{*\lambda}$ name for an element of $\omega^\omega$ and $(X,C) \in \tilde{\mathbb{S}}^{*\lambda}$, there is $(Y,D) \leq (X,C)$ and a continuous function $f \colon Y \to \omega^\omega$ such that $$i_\lambda(Y,D) \Vdash \dot y = f(\bar x_{\operatorname{gen}} \restriction D),$$ where $\bar x_{\operatorname{gen}}$ is a name for the generic element of $(2^\omega)^\lambda$ added by $\mathbb{S}^{*\lambda}$. Further, if $A \subseteq \omega^\omega$ is any analytic set, we can ensure that $i_\lambda(Y,D) \Vdash \dot y \in A$ iff $f''Y \subseteq A$.
\end{lemma}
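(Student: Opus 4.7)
The plan is to handle the three assertions of Lemma~\ref{lem:crninfinite} in turn, each by a fusion argument adapted from the standard Axiom~A countable-support Sacks fusion. Order-preservation of $i_\lambda$ is immediate from the definitions. Preservation of incompatibility can be verified by observing that, from a common $\mathbb{S}^{*\lambda}$-extension of $i_\lambda(X_1, C_1)$ and $i_\lambda(X_2, C_2)$, one may explicitly construct $(Z, C_1 \cup C_2) \in \tilde{\mathbb{S}}^{*\lambda}$, where $Z \subseteq (2^\omega)^{C_1 \cup C_2}$ is the closed set of sequences whose restriction to each $C_i$ lies in $X_i$; the tree-function continuity for $Z$ reduces to that for the $X_i$. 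For density, given $\bar p \in \mathbb{S}^{*\lambda}$, I would pick a countable $C \supseteq \supp(\bar p)$ and run a countable-support Sacks fusion below $\bar p$ on the coordinates of $C$, arranging at stage $n$ that, for each $\alpha \in C$, the $n$-th splitting level of $q(\alpha)$ is determined by finitely many coordinates of $\bar x \restriction (C \cap \alpha)$. Passing to the limit gives the closed $X \subseteq (2^\omega)^C$ with $\bar x \mapsto T_{\bar x}$ continuous and $i_\lambda(X, C) \le \bar p$.

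For the continuous reading of names, given $\dot y$ and $(X, C)$, I would perform a second fusion below $i_\lambda(X, C)$ in which, at stage $n$, along each branch of the current fusion tree, the next condition is chosen to decide $\dot y(n)$, while any new ordinals appearing in the supports are absorbed into a countable $D \supseteq C$. The resulting closed $Y \subseteq (2^\omega)^D$ together with the map $f \colon Y \to \omega^\omega$ reading off the decisions is continuous by construction. This is essentially a repackaging of \cite[Lemma~2.3--2.5]{Schilhan2020} into the $(X, C)$ formalism.

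For the analytic refinement, write $A = \proj[T]$ for a tree $T \subseteq \omega^{<\omega} \times \omega^{<\omega}$, and first strengthen $(Y, D)$ to decide whether $\dot y \in A$. If it forces $\dot y \notin A$, then $f''Y \not\subseteq A$ holds automatically, since any generic $\bar x \in Y$ yields $f(\bar x) \notin A$. The substantive case is when $(Y, D)$ forces $\dot y \in A$; here I would run, in parallel with the $f$-fusion, a second fusion that continuously builds a function $g \colon Y \to \omega^\omega$ satisfying $(f(\bar x) \restriction n, g(\bar x) \restriction n) \in T$ for all $n$ and $\bar x \in Y$. At each stage, the current condition still forces $\dot y \in A$, so by properness a name for a witness $z \in \omega^\omega$ with $(\dot y \restriction n, z \restriction n) \in T$ is available and its next value can be read continuously by the same machinery used for $f$. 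The resulting $g$ witnesses $f''Y \subseteq A$ pointwise. I expect coordinating the $f$- and $g$-fusions with the continuity requirement on $\bar x \mapsto T_{\bar x}$ demanded by $\tilde{\mathbb{S}}^{*\lambda}$ to be the main technical obstacle.
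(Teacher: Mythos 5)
The paper gives no proof of this lemma at all --- it is stated with the remark that it is ``basically a reformulation of \cite[Lemma~2.3--2.5]{Schilhan2020}'' --- so your fusion-based reconstruction is precisely the kind of argument being outsourced, and in outline (the fusion arranging continuity of the splitting levels for density, the second fusion deciding $\dot y(n)$ branchwise, and the parallel fusion building a continuous witness $g$ through a tree projecting to $A$) it matches the cited arguments.

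There is, however, one step that fails as written. For preservation of incompatibility you define $Z \subseteq (2^\omega)^{C_1 \cup C_2}$ as the set of sequences whose restriction to each $C_i$ lies in $X_i$ and assert that $(Z, C_1 \cup C_2) \in \tilde{\mathbb{S}}^{*\lambda}$, with ``tree-function continuity reducing to that of the $X_i$.'' But at a coordinate $\alpha \in C_1 \cap C_2$ the fiber tree of $Z$ at $\bar x$ is (contained in) the intersection of the corresponding fiber trees of $X_1$ and $X_2$, and an intersection of two perfect trees need not be perfect --- it can have isolated or dead branches --- so the defining requirement $T_{\bar x} \in \mathbb{S}$ can fail; the existence of a common $\mathbb{S}^{*\lambda}$-extension only guarantees that $Z$ is nonempty, not that it is a condition. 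The standard repair is to prove density first and then pull back: given a common extension $\bar r$ of $i_\lambda(X_1,C_1)$ and $i_\lambda(X_2,C_2)$, use density to find $(Z,D)$ with $i_\lambda(Z,D) \leq \bar r$, and verify $Z \restriction C_i \subseteq X_i$ (using that $X_i$ is closed and that every point of $Z$ is a branch of the condition $i_\lambda(Z,D)$), so that $(Z,D)$ is a common extension in $\tilde{\mathbb{S}}^{*\lambda}$. A smaller point: in the analytic refinement, when $(Y,D)$ forces $\dot y \notin A$ you conclude $f''Y \not\subseteq A$ from ``any generic $\bar x \in Y$,'' but the generic point does not lie in $V$; you need to add that $f''Y \subseteq A$ is a $\mathbf{\Pi}^1_1$ statement about codes in $V$ and hence absolute, so its failure in $V[G]$, witnessed by $\bar x_{\operatorname{gen}} \restriction D$, yields its failure in $V$.
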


For a condition $\bar p \in \mathbb{S}^k$, $[\bar p]$ is easily seen to be homeomorphic to $(2^\omega)^k$ and for topological arguments it often makes sense to assume it is just equal to $(2^\omega)^k$. This will be useful later in stating Lemma~\ref{lem:gettingmcg}. Thus let us fix the following canonical homeomorphisms.

\begin{definition}\label{def:isom}
For a perfect tree $T \subseteq 2^{<\omega}$ we let $\eta_T \colon [T] \to 2^\omega$ be a canonical homeomorphism between $[T]$ and $2^\omega$, in a way that both $\eta_T(x)$ and $\eta_T^{-1}(x)$ depend continuously on $T$ and $x$. For $(X,C) \in \tilde{\mathbb{S}}^{*\lambda}$, let $\alpha := \tp(C)$ and $\iota \colon \alpha \to C$ be the unique order isomorphism. Then we let $\Phi_X \colon X \to (2^\omega)^\alpha$ be the map defined by $$\Phi_X(\bar x)(\beta) = \eta_{T_{\bar x \restriction \iota(\beta)}}(x(\iota(\beta))),$$ for every $\beta <\alpha$. It is not hard to see that this defines a homeomorphism.

Similarly, we define $\Phi_{\bar p} \colon [\bar p] \to (2^\omega)^k$ for $\bar p \in \mathbb{S}^k$, $k \in \omega$. 
\end{definition}

\subsection{Symmetric extensions}

To produce models of $\ZF$ in which $\AC$ fails, we use the method of symmetric extensions. This technique produces an intermediate model between $V$ and a generic extension $V[G]$ by only evaluating names from a specific carefully defined class of names. Let us review its basics. A more detailed exposition can be found in \cite{Jech1973}.

Let $\mathbb{P}$ be a forcing poset and $\pi$ an automorphism of $\mathbb{P}$. Then $\pi$ extends naturally to $\mathbb{P}$-names by recursion on the rank of a name $\dot x$: $$\pi(\dot x) = \{(\pi(p), \pi(\dot y)) : (p,\dot y) \in \dot x \}.$$

The following lemma is well-known (see e.g. \cite[Lemma~14.37]{Jech2003}).
\begin{lemma}[Symmetry Lemma]
Let $p\in \mathbb{P}$, $\dot x$ a $\mathbb{P}$-name and $\pi$ an automorphism of $\mathbb{P}$. Then $$p\Vdash\varphi(\dot x)\leftrightarrow\pi(p)\Vdash\varphi(\pi(\dot x)).$$
\end{lemma}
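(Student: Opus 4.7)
The plan is to give a semantic proof via the generic model theorem. The central observation is that, because $\pi$ is an automorphism of $\mathbb{P}$, the map $G \mapsto \pi[G] := \{\pi(p) : p \in G\}$ is a bijection from the class of $V$-generic filters on $\mathbb{P}$ to itself (with inverse induced by $\pi^{-1}$), and $p \in G$ if and only if $\pi(p) \in \pi[G]$. Since $\pi \in V$, we also have $V[G] = V[\pi[G]]$.

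The first real step is to prove by $\in$-induction on the name $\dot x$ that for every generic $G$,
\[
(\pi(\dot x))_{\pi[G]} \;=\; \dot x_G.
\]
Unpacking the definition, $(\pi(\dot x))_{\pi[G]} = \{(\pi(\dot y))_{\pi[G]} : (p,\dot y) \in \dot x \text{ and } p \in G\}$, which by the induction hypothesis equals $\{\dot y_G : (p,\dot y) \in \dot x,\ p \in G\} = \dot x_G$. This identity is the technical heart of the argument; everything else is bookkeeping.

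With this in hand, the lemma follows from the definability of the forcing relation. Assume $p \Vdash \varphi(\dot x)$, and let $H$ be any $V$-generic filter with $\pi(p) \in H$. Set $G := \pi^{-1}[H]$, so that $p \in G$, $G$ is generic, and $\pi[G] = H$. By assumption $V[G] \models \varphi(\dot x_G)$, and by the identity above this is $V[H] \models \varphi((\pi(\dot x))_H)$, since $V[G] = V[H]$. As $H$ was arbitrary, $\pi(p) \Vdash \varphi(\pi(\dot x))$. The converse is the same argument applied to $\pi^{-1}$, using that $\pi^{-1}$ is also an automorphism and $\pi^{-1}(\pi(\dot x)) = \dot x$ (again a routine induction on rank).

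There is really no main obstacle here; the proof is essentially a verification. The only thing to be careful about is the base case and well-definedness of $\pi$ acting on names, i.e.\ checking that $\pi(\dot x)$ as defined by recursion on rank is again a $\mathbb{P}$-name, and that the evaluation lemma above is proved by the correct induction (on the rank of $\dot x$, not on the complexity of $\varphi$). An alternative, purely syntactic route would induct on the complexity of $\varphi$ using the recursive definition of $\Vdash$; this avoids invoking genericity but multiplies cases (the atomic cases $\dot x = \dot y$ and $\dot x \in \dot y$ each require their own recursion), so the semantic approach above is preferable.
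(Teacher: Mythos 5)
Your proof is correct and is the standard argument; the paper does not prove this lemma itself but cites it as well-known (Jech, Lemma~14.37), and your semantic route via the identity $(\pi(\dot x))_{\pi[G]} = \dot x_G$ together with the fact that $\pi$ permutes the generic filters is exactly the usual proof. The only point worth flagging is that the step from ``$V[H]\models\varphi((\pi(\dot x))_H)$ for all generic $H\ni\pi(p)$'' back to ``$\pi(p)\Vdash\varphi(\pi(\dot x))$'' invokes the forcing theorem, so one must either work over a countable transitive model or read $\Vdash$ as the definable forcing relation --- a routine caveat you essentially acknowledge by mentioning the syntactic alternative.
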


Let $\sG$ be a group of automorphisms of $\mathbb{P}$. For any $\mathbb{P}$-name $\dot x$ we define the subgroup $\sym_\sG(\dot x)$ of $\sG$ of automorphisms that fix $\dot x$. To be more precise, $$\sym_\sG(\dot x) = \{\pi \in \sG : \pi(\dot x) = \dot x \}.$$ A collection $\sF$ is called a \textit{filter of subgroups} of $\sG$ if it is a non-empty collection of subgroups of $\sG$ which is closed under supergroups and finite intersections. We say that $\sF$ is \textit{normal} if it is further closed under conjugation, i.e. for any $H\in\sF$ and $\pi\in\sG$, also $\pi H\pi^{-1}\in\sF$.

A \emph{symmetric system} is a triple $(\mathbb{P}, \sG, \sF)$ where $\mathbb{P}$ is a forcing notion, $\sG$ is a group of automorphisms of $\mathbb{P}$ and $\sF$ is a normal filter of subgroups of $\sG$. Given such a system, we say that a name $\dot x$ is \emph{$\sF$-symmetric} if $\sym_\sG(\dot x) \in \sF$. Moreover, by recursion on the rank of $\dot x$ we say that $\dot x$ is \emph{hereditarily $\sF$-symmetric} if for every $(p,\dot y) \in \dot x$, $\dot y$ is hereditarily $\sF$-symmetric. The class of hereditarily $\sF$-symmetric names will be denoted by $\HS_\sF$. Finally, given a $\mathbb{P}$-generic $G$ over $V$, we define the class $M := \{\dot x^G : \dot x \in \HS_\sF \}$. Then $M$ is a transitive model of $\ZF$ and $V \subseteq M \subseteq V[G]$ (see \cite[Lemma 15.51]{Jech2003}). $M$ is a so called \emph{symmetric extension} of $V$ produced by the symmetric system $(\mathbb{P}, \sG, \sF)$. 
Typically, in applications we will omit all the subscripts when $\sG$ and $\sF$ are clear from context.

We will now define the symmetric systems that we will use in the main results. 

\subsubsection{The symmetric Sacks iteration}
\label{sec:symit}
Let $\mathbb{P} = \mathbb{S}^{*\lambda}$ for some ordinal $\lambda$. We call an automorphism $\pi$ of $\mathbb{S}^{*\lambda}$ \emph{coherent} if $$\pi(\bar p) \restriction \alpha = \pi(\bar p \restriction \alpha)$$ for any $\bar p \in \mathbb{S}^{*\lambda}$ and $\alpha \leq \lambda$. Whenever $\bar \pi = \langle \dot \pi_\alpha : \alpha < \lambda \rangle$ is a sequence such that for every $\alpha < \lambda$, $\dot \pi_\alpha$ is an $\mathbb{S}^{*\alpha}$-name for an automorphism of $\mathbb{S}$, we use $\bar \pi$ to denote a coherent automorphism of $\mathbb{S}^{*\lambda}$ defined by induction on $\lambda$ as follows.\footnote{Something similar appears in \cite{Karagila2019} where it is denoted $\int_{\bar \pi}$.} If $\lambda = \alpha + 1$, we let $$\bar \pi(\bar p) = (\bar \pi \restriction \alpha)( \bar p \restriction \beta)^\frown \dot r,$$ where $\dot r$ is a name for $\dot \pi_\alpha^G\left((\bar \pi \restriction \alpha)(\dot p(\beta))^G\right)$, for a $\mathbb{S}^{*\alpha}$-generic $G$.\footnote{There are of course many equivalent names that one can pick for $\dot r$, but it does not matter which one we choose as long as we ensure that $\bar \pi$ is bijective.} If $\lambda$ is a limit ordinal, $$\bar \pi(\bar p) = \bigcup_{\alpha < \lambda} (\bar\pi \restriction\alpha)(\bar p \restriction\alpha).$$
It is easy to check by induction that this indeed forms a coherent automorphism of $\mathbb{S}^{*\lambda}$. In fact it is not hard to see that every coherent automorphism is of this form, although this won't be relevant to us. Let $\sG$ be the group of coherent automorphisms of $\mathbb{S}^{*\lambda}$ and $\sF$ be the filter generated by subgroups $$H_\alpha := \{ \pi \in \sG : \forall \bar p \in \mathbb{S}^{*\alpha} ( \pi(\bar p) = \bar p) \}$$ for $\alpha < \lambda$. 
To see that $\sF$ is normal it suffices to note that $\pi H_\alpha \pi^{-1} = H_\alpha$ for any $\alpha$ and coherent $\pi$.


\begin{lemma}$(\GCH)$
\label{lem:symit}
Let $\lambda$ be a limit ordinal of cofinality $\leq \omega_1$ or a regular cardinal. Let $\cf(\lambda) = \kappa$ and $M$ be a symmetric extension of $V$ produced by the symmetric system $(\mathbb{S}^{*\lambda}, \sG, \sF)$ and an $\mathbb{S}^{*\lambda}$-generic filter $G$. Finally, let $\langle x_\alpha : \alpha <\lambda \rangle$ be the generic sequence of reals added by $G$. Then:

\begin{enumerate}
    \item $M$ is closed under $<\kappa$-sequences in $V[G]$.
    \item $\DC_{< \kappa}$ holds in $M$. 
    \item For any set $X \in M$ of ordinals, $X \in V[\langle x_\beta : \beta < \alpha \rangle]$ for some $\alpha < \lambda$.
    \item There is no well-ordering of the reals in $M$.
    \item Letting $R_\alpha := \omega^\omega \cap V[\langle x_\beta : \beta < \alpha \rangle]$, $\langle R_\alpha : \alpha < \lambda \rangle \in M$.
    \item $\AC_{\lambda}(\mathbb{R})$ fails in $M$.
\end{enumerate}
\end{lemma}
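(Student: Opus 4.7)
The plan is to prove (3) first, as it is the key locality principle from which the other items follow; (5), (6), (4) then come as direct consequences, and (1), (2) via the $\kappa$-completeness of $\sF$ combined with the properness of $\mathbb{S}^{*\lambda}$. For (3), let $\dot X \in \HS_\sF$ be a name for a set of ordinals with $H_\alpha \subseteq \sym_\sG(\dot X)$ for some $\alpha < \lambda$. I aim to show that for any ordinal $\xi$ and any $p \in \mathbb{S}^{*\lambda}$ with $p \Vdash \check\xi \in \dot X$, already $p \restriction \alpha \Vdash \check\xi \in \dot X$; this immediately yields $X \in V[\langle x_\beta : \beta < \alpha\rangle]$. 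Via the Symmetry Lemma, this reduces to the following mixing lemma, which I expect to be the main obstacle of the proof: given $p, p' \in \mathbb{S}^{*\lambda}$ with $p \restriction \alpha = p' \restriction \alpha$, there is a coherent $\pi \in H_\alpha$ with $\pi(p)$ compatible with $p'$. This relies on the rich automorphism group of Sacks forcing, since any two perfect trees can be sent to compatible ones by a tree-isomorphism-induced automorphism of $\mathbb{S}$; I would construct $\pi$ by recursion on $\beta \in [\alpha, \lambda)$, at each stage producing a name $\dot\pi_\beta$ for such an automorphism, using Lemma~\ref{lem:crninfinite} to ensure the relevant trees are read off continuously from the generic below $\beta$.

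For (5), the natural name $\dot R_\alpha$ for $\omega^\omega \cap V[\langle x_\beta : \beta < \alpha\rangle]$, built from canonical $\mathbb{S}^{*\alpha}$-names for reals (of bounded rank) paired with $\mathbbm{1}$, is fixed by every coherent $\pi \in \sG$, because $\pi \restriction \mathbb{S}^{*\alpha}$ is an automorphism of $\mathbb{S}^{*\alpha}$ and hence permutes its names among themselves. So $\dot R_\alpha \in \HS_\sF$, as is the assembled name $\{(\mathbbm{1}, (\check\alpha, \dot R_\alpha)) : \alpha < \lambda\}$ for the sequence. For (6), I consider $\vec A := \langle R_{\alpha+1} \setminus R_\alpha : \alpha < \lambda\rangle \in M$, where each $A_\alpha$ is non-empty since $x_\alpha \in A_\alpha$. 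A hypothetical choice function $f \in M$ encodes as a set of ordinals, so by (3) $f \in V[\langle x_\beta : \beta < \gamma\rangle]$ for some $\gamma < \lambda$; but then $f(\gamma) \in R_\gamma$, contradicting $f(\gamma) \in R_{\gamma+1} \setminus R_\gamma$. Item (4) is immediate, since any well-ordering of $\mathbb{R}$ in $M$ would provide such a choice function.

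For (1), observe that $\sF$ is $\kappa$-complete: for any $\gamma < \kappa = \cf(\lambda)$ and $\langle \alpha_\beta : \beta < \gamma\rangle \subseteq \lambda$ one has $\sup_\beta \alpha_\beta < \lambda$, so $H_{\sup_\beta \alpha_\beta} \subseteq \bigcap_\beta H_{\alpha_\beta}$ belongs to $\sF$. The case $\kappa = \omega$ is trivial; for $\kappa \geq \omega_1$, given $\vec y = \langle y_\beta : \beta < \gamma\rangle \in V[G]$ with each $y_\beta \in M$, I would pick a hereditarily symmetric name $\dot y_\beta$ for each $y_\beta$ in $V[G]$, use properness of the countable support iteration (with $\cf(\lambda) > \omega$) to locate the sequence of names in some $V[\langle x_\beta : \beta < \alpha\rangle]$, and then assemble a hereditarily symmetric $\mathbb{S}^{*\lambda}$-name for $\vec y$ by invoking $\kappa$-completeness. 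Finally, (2) follows: given a $\gamma$-closed tree $T \in M$ with $\gamma < \kappa$, apply $\DC_\gamma$ in $V[G] \models \ZFC$ to obtain a chain of length $\gamma$ and use (1) to pull it back into $M$.
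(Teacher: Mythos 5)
Your proposal follows the paper's proof very closely: the mixing argument for (3) via a coherent automorphism that is the identity below $\alpha$ and realigns the Sacks coordinates above $\alpha$ using the homogeneity of $\mathbb{S}$, the $\sG$-invariance of the canonical name built from $\mathbb{S}^{*\alpha}$-names for reals in (5), the coding-by-ordinals argument for (6), and the derivation of (2) from (1) are all exactly the paper's arguments. (The paper derives (4) directly from (3) rather than from (6), but your route is equivalent; and your statement of the mixing lemma with $p\restriction\alpha = p'\restriction\alpha$ should really be $p'\restriction\alpha \leq p\restriction\alpha$, which is what the application produces --- harmless, since $\pi \in H_\alpha$ fixes the part below $\alpha$ anyway.)

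The one genuine gap is in item (1) for a regular cardinal $\lambda = \kappa \geq \omega_2$. You invoke only properness to locate the sequence of names in an initial segment, but properness controls only countable sequences; for $\gamma$-sequences with $\omega_1 \leq \gamma < \kappa$ (e.g.\ $\gamma = \omega_1$ when $\lambda = \omega_2$, which is precisely the case needed for $\DC_{\omega_1}$ in Theorem~\ref{thm:mainit}) it gives nothing. The paper covers this case by replacing properness with the $\lambda$-cc of $\mathbb{S}^{*\lambda}$, which is where the $\GCH$ hypothesis enters; your proposal never uses $\GCH$. A secondary point in the same item: ``locating the sequence of names in $V[\langle x_\beta : \beta < \alpha\rangle]$'' is not quite enough, since the assembled name must lie in $V$. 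The paper arranges this by fixing, in $V$, maximal antichains $A_i$ with hereditarily symmetric names $\dot y_{i,\bar q}$ attached to their elements, shrinking them to countable sets $A_i\restriction\bar p$ contained in $\mathbb{S}^{*\alpha}$ below a condition $\bar p \in G$, and then mixing these into single names $\dot z_i$ fixed by $H_\alpha$; you should make that step explicit rather than choosing names inside $V[G]$.
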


\begin{proof}
(1): When $\kappa = \omega$, then the statement is trivial. Now assume that $\kappa = \omega_1$. Let $\langle \dot y_i^G : i < \xi \rangle \in V[G]$, $\xi < \omega_1$, where $\{\dot y_i^G : i < \xi \} \subseteq M$. In $V$, we find for each $i < \xi$ a maximal antichain $A_i \subseteq \mathbb{S}^{*\lambda}$ such that for each $\bar q \in A_i$, there is $\dot y_{i,\bar q} \in \HS$ such that $$\bar q \Vdash \dot y_i = \dot y_{i,\bar q}.$$ 
By properness and since $\kappa$ is regular uncountable, we find a condition $\bar p \in G$ and $\alpha < \lambda$ such that for every $i < \xi$, $ A_i \restriction \bar p := \{ \bar q \in A_i : \bar p \parallel \bar q \} \subseteq \mathbb{S}^{*\alpha}$ is countable and $H_\alpha \subseteq \sym(\dot y_{i,\bar q})$ for every $\bar q \in A_i \restriction \bar p$. Let $\dot z_i$ be the name consisting of pairs $(\bar r, \dot z)$ such that there are $\bar q \in A_i \restriction \bar p$, $\bar q' \in \mathbb{S}^{*\lambda}$, $ \bar r \leq \bar q, \bar q'$ and $(\bar q', \dot z) \in \dot y_{i,\bar q}$. Then $\bar p \Vdash \dot y_i = \dot z_i$ and it is easy to check that $H_\alpha \subseteq \sym(\dot z_i)$ and moreover that $\dot z_i \in \HS$. But then the standard name for the sequence $\langle \dot z_i^G : i < \xi \rangle$ is hereditarily symmetric as witnessed by $H_\alpha \in \sF$. When $\lambda \geq \omega_2$ is regular, we use the same argument but replace properness by the $\lambda$-cc of $\mathbb{S}^{*\lambda}$.

(2): Let $T \in M$ be an $\alpha$-closed tree without maximal elements, where $\alpha < \kappa$. By $(1)$, $T$ is also $\alpha$-closed in $V[G]$. Since $V[G]$ satisfies $\ZFC$ there is a branch $\langle a_\beta : \beta < \alpha \rangle \in V[G]$ through $T$. Again by $(1)$, $\langle a_\beta : \beta < \alpha \rangle \in M$.

(3): Let $\dot X \in \HS$ be a name for $X$, $H_\alpha \subseteq \sym(\dot X)$, $\gamma$ an ordinal and $\bar p \Vdash \gamma \in \dot X$. We claim that already $\bar p \restriction \alpha \Vdash \gamma \in \dot X$. Otherwise, let $\bar p' \leq \bar p \restriction \alpha$ be such that $\bar p' \Vdash \gamma \notin \dot X$. Consider a sequence $\bar \pi = \langle \dot \pi_\beta : \beta < \lambda \rangle$ defined recursively as follows. For $\beta < \alpha$, $\dot \pi_\beta$ is a name for the identity. For $\beta \geq \alpha$, we let $\dot \pi_\beta$ be a name for an automorphism $\pi_\beta$ such that $\pi_\beta \left( (\bar \pi \restriction \beta)(\dot p'(\beta))^K\right) \parallel \dot p(\beta)^K$, for any $\mathbb{S}^{*\beta}$-generic $K$. Then note that $\bar \pi (\bar p') \parallel \bar p$, $\bar \pi \in H_\alpha$ and by the symmetry lemma, $$\bar \pi (\bar p') \Vdash \gamma \notin \dot X.$$ This is obviously a contradiction. Thus $G \cap \mathbb{S}^{*\alpha} \in V[G \cap \mathbb{S}^{*\alpha}] = V[\langle x_\beta : \beta < \alpha \rangle] \subseteq M$ completely determines $X$.  

(4): Suppose that $\mathbb{R}$ is well-ordered in $M$. Then there is a set of ordinals that codes this well-order and every real appearing in it. But then this set exists in $V[\langle x_\beta : \beta < \alpha \rangle]$ for some $\alpha < \lambda$. This is impossible as new reals are added at later stages.

(5): For any $\alpha< \lambda$, let $\dot R_\alpha := \{ \dot x : \mathbbm{1} \Vdash \dot x \in \omega^\omega \wedge \dot x \text{ is an } \mathbb{S}^{*\alpha}\text{-name}\}$. Then note that $\sym(\dot R_\alpha) = \sG$. Clearly, $\dot R_\alpha^G = R_\alpha$. The canonical name for the sequence $\langle \dot R_\alpha^G : \alpha < \lambda \rangle$ is then easily seen to also be in $\HS$ as witnessed by $\sG \in \sF$. 

(6): Consider $\langle R_{\alpha +1} \setminus R_\alpha : \alpha < \lambda \rangle$. If $f \colon \lambda \to \omega^\omega$ is a choice function, then $f$ is coded by a set of ordinals. Thus $f \in V[\langle x_\beta : \beta < \alpha \rangle]$ for some $\alpha < \lambda$. This is clearly impossible, as $f(\alpha) \notin V[\langle x_\beta : \beta < \alpha \rangle]$ must be the case.
\end{proof}

$\omega_2$ will be collapsed when $\lambda \geq \omega_3$, so the interesting case is usually $\lambda \leq \omega_2$. Let us also note that (1) and (2) follow from a more general result, that can be abstracted from the proof. For example when $\mathbb{P}$ is proper and $\sF$ is countably closed, then $M$ is closed under countable sequences. And when $\mathbb{P}$ is $\lambda$-cc and $\sF$ is $<\lambda$-closed then $M$ is closed under $<\lambda$-sequences (see \cite[Lemma 3.4]{Karagila2019a}).

\subsection{Universally Baire sets}

In the following, $\mathbb{C}$ denotes \emph{Cohen forcing}, i.e. $\omega^{<\omega}$ ordered by extension.\footnote{Depending on the context, which should always be clear, $\mathbb{C}$ could also denote $2^{<\omega}$ or another non-trivial countable poset.} Whenever $\alpha$ is an ordinal and $T$ is a tree on $\omega\times\alpha$, $p[T]$ denotes the projection of $[T]$ on the first coordinate, i.e. $$p[T] := \{ x \in \omega^\omega : \exists w \in \alpha^\omega \forall n \in \omega ( (x\restriction n, w \restriction n) \in T) \}.$$

\begin{definition}[see e.g. \cite{Feng1992}]\label{def:omegauniv}
A set $A \subseteq \omega^\omega$ is called \emph{$\omega$-universally Baire} if there is some ordinal $\kappa$ and there are trees $T,U$ on $\omega \times \kappa$, so that 

\begin{enumerate}
    \item $p[T] = A$, $p[U] = \omega^\omega \setminus A$, 
    \item $\Vdash_{\mathbb{C}} p[\check T] \cup p[\check U] = \omega^\omega$.
\end{enumerate}
\end{definition}

The following is an easy absoluteness argument. 

\begin{remark}
If $A$, $T$ and $U$ are as above and $B = \omega^\omega \setminus A$, then $$\Vdash_{\mathbb{C}} \check{A} \subseteq p[\check T] \wedge \check{B} \subseteq p[\check U] \wedge p[\check T] \cap p[\check U] = \emptyset.$$ 
Thus in any Cohen extension of $V$, $p[T]$ and $p[U]$ form a partition of $\omega^\omega$.
\end{remark}

\begin{lemma}[\cite{Feng1992}]\label{lem:bairecontpre}
A set $A \subseteq \omega^\omega$ is $\omega$-universally Baire iff for every continuous $f \colon \omega^\omega \to \omega^\omega$, $f^{-1}(A)$ has the Baire property. 
\end{lemma}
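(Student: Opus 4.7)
The plan is to prove both implications; the forward direction splits into closure under continuous preimages and showing any $\omega$-universally Baire set has BP, while the reverse direction requires constructing witness trees from the BP hypothesis. I first reduce the forward direction to BP for $\omega$-universally Baire sets using closure under continuous preimages. Given continuous $f \colon \omega^\omega \to \omega^\omega$ coded by a monotone $\phi \colon \omega^{<\omega} \to \omega^{<\omega}$ with $|\phi(s)| \to \infty$, and trees $T, U$ on $\omega \times \kappa$ witnessing that $A$ is $\omega$-universally Baire, the pullback trees $T_f := \{(s, \bar s) : |\bar s|=|\phi(s)|,\ (\phi(s),\bar s) \in T\}$ and $U_f$ analogously satisfy $p[T_f]=f^{-1}(A)$ and $p[U_f]=f^{-1}(A^c)$, and their partition in any Cohen extension follows by pulling back the partition for $T, U$.

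To establish that an $\omega$-universally Baire $A$ has BP, define $D_T := \{s \in \omega^{<\omega} : s \Vdash_{\mathbb{C}} \dot c \in p[\check T]\}$ and $O_T := \bigcup_{s \in D_T}[s]$, and analogously $D_U, O_U$. Both $D_T, D_U$ are upward closed and mutually incompatible---a common extension would force $p[\check T] \cap p[\check U] \neq \emptyset$, contradicting the remark---and $D_T \cup D_U$ is dense in $\mathbb{C}$ by the partition condition; hence $O_T, O_U$ are disjoint open sets with comeager union. The key claim is that $A \triangle O_T$ is meager. To prove it, fix a countable $M \prec H(\theta)$ with $T, U \in M$; by elementarity the forcing relations defining $D_T, D_U$ agree in $M$ and $V$, so for any $x \in \omega^\omega$ that is Cohen-generic over $M$ the filter $G_x := \{s : s \sqsubset x\}$ meets $D_T \cup D_U$, and by the forcing theorem applied inside $M[x] \subseteq V$, either $M[x] \models x \in p[T]$ (giving $x \in A \cap O_T$) or $M[x] \models x \in p[U]$ (giving $x \in A^c$ and hence $x \notin O_T$, and $x \in O_U$). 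Since the set of $M$-generic reals is comeager, $A$ and $O_T$ agree on a comeager set, so $A$ has BP.

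For the reverse direction, I construct trees $T, U$ witnessing $\omega$-universal Baireness from the BP hypothesis. Take $\kappa$ large enough to encode countable elementary submodels of $H(\theta)$, countable sequences of Cohen conditions, and the open sets and indexed families of closed nowhere dense sets arising from BP decompositions. A branch of $T$ above $x \in \omega^\omega$ is designed to code a countable $M \prec H(\theta)$ together with a certificate, derived from the BP of $f^{-1}(A)$ for suitable continuous $f$ coded in $M$, that $x$ falls in the positive piece of an $M$-definable decomposition aligned with membership in $A$; $U$ is built dually from $A^c$. The main obstacle, which I expect to be the hardest step, is showing that the partition $p[T] \cup p[U] = \omega^\omega$ persists in every Cohen extension: given a new real $y \in V[c]$, one must exhibit a branch of $T$ or $U$ above $y$ whose certificate lies in $V$. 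The idea is to invoke the BP hypothesis for continuous $f \in V$ such that $f$ applied to the Cohen generic recovers $y$, translating the BP decomposition of $f^{-1}(A)$ in $V$ into a branch above $y$ via the forcing theorem; the construction must be amalgamated so that the resulting trees are independent of the choice of such $f$.
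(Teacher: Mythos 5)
The paper does not actually prove this lemma: it is quoted directly from Feng--Magidor--Woodin \cite{Feng1992}, so there is no in-paper argument to compare against. Judged on its own terms, your forward direction is essentially complete and correct. The pullback trees $T_f,U_f$ give closure under continuous preimages (modulo the cosmetic reindexing needed so that $T_f$ is literally a tree on $\omega\times\kappa$ when $\vert\phi(s)\vert\neq\vert s\vert$), and the Baire-property argument via $D_T$, $D_U$, the comeager set of $M$-generic reals, and upward absoluteness of ``$x\in p[T]$'' from $M[x]$ to $V$ is exactly the standard one; it is the same absoluteness step the paper itself carries out in the proof of Lemma~\ref{lem:bairelikescohen}(1), via the transitive collapse of $M$.

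The reverse direction, however, is not a proof but a declaration of intent, and the two steps you defer are precisely where all the content of that implication lives. First, well-definedness: a real $y$ in a Cohen extension has many representations $y=\tilde f(c)=\tilde f'(c')$ with $f,f'\in V$ continuous and $c,c'$ generic over suitable countable models, and you must show that the verdict ``$c\in O_f$'' (where $O_f$ is the open set witnessing the Baire property of $f^{-1}(A)$) does not depend on the representation; without this, $p[T]$ and $p[U]$ need not even be disjoint in the extension, which is part of what has to be arranged. This requires an actual argument --- comparing $f$ and $f'$ through a common refinement, using that the relevant further continuous preimages of $A$ also have the Baire property, and exploiting homogeneity of Cohen forcing --- not just the remark that ``the construction must be amalgamated.'' Second, even granting well-definedness, you have not said how a branch of $T$ above $y$ is actually produced in $V[c]$ from data coded in $V$: the certificate must be recoverable from $c$ together with objects of $V$ alone, and the genericity argument showing that every new real receives a branch through exactly one of $T,U$ is missing. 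Note finally that it is this reverse direction that the paper actually uses (to conclude that if every projective set has the Baire property then every projective set is $\omega$-universally Baire), so the gap is not in a throwaway half of the equivalence. As written, the proposal establishes one implication and sketches a plausible but unexecuted strategy for the other.
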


Lemma~\ref{lem:bairecontpre} let's us define the pointclass of $\omega$-universally Baire sets on arbitrary Polish spaces $X$ as those sets $A \subseteq X$, such that for any $f \colon \omega^\omega \to X$ continuous, $f^{-1}(A)$ has the Baire property in $\omega^\omega$. In particular, this pointclass is closed under continuous preimages, countable unions and intersections and complements. On $\omega^\omega$ this coincides with Definition~\ref{def:omegauniv}. Assuming $\ZF + \DC$, all analytic and coanalytic sets are $\omega$-universally Baire, since they have tree representations and by absoluteness. Followingly, all $\sigma(\mathbf\Sigma^1_1 \cup \mathbf\Pi^1_1)$ sets (the $\sigma$-algebra generated by $\mathbf\Sigma^1_1 \cup \mathbf\Pi^1_1$) are $\omega$-universally Baire.

\begin{cor}
Assume that every projective set has the Baire property. Then every projective set is $\omega$-universally Baire. 
\end{cor}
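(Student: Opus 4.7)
The plan is to apply Lemma~\ref{lem:bairecontpre} directly, exploiting the fact that the projective pointclass is closed under continuous preimages. Given a projective set $A \subseteq X$ on a Polish space $X$, one wants to show that $A$ is $\omega$-universally Baire, which, by the generalization of Lemma~\ref{lem:bairecontpre} to arbitrary Polish spaces stated in the paragraph after the lemma, amounts to checking that $f^{-1}(A)$ has the Baire property in $\omega^\omega$ for every continuous $f \colon \omega^\omega \to X$.

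First I would fix such an $f$ and observe that $f^{-1}(A) \subseteq \omega^\omega$ is itself projective: this is a routine induction on the projective hierarchy, using that continuous preimages commute with complements, countable unions, and projections (the latter needing only the standard fact that a continuous map from $\omega^\omega$ to $X$ lifts to a continuous map between the product spaces so that preimage commutes with projection). Then by the standing hypothesis that every projective set has the Baire property, $f^{-1}(A)$ has the Baire property in $\omega^\omega$, which is exactly what Lemma~\ref{lem:bairecontpre} requires.

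The argument is essentially a one-liner once the generalization of Lemma~\ref{lem:bairecontpre} to arbitrary Polish spaces is in hand, so there is no genuine obstacle; the only thing to be a little careful about is that the definition of $\omega$-universally Baire in Definition~\ref{def:omegauniv} is stated on $\omega^\omega$, and one must invoke the extended version given in the paragraph after the lemma to conclude on a general Polish space $X$.
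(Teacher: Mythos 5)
Your proof is correct and is exactly the argument the paper intends: the corollary follows immediately from Lemma~\ref{lem:bairecontpre} (in its extended form for arbitrary Polish spaces) together with the closure of the projective pointclass under continuous preimages, which is precisely the route you take.
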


The following lemma will let us apply \cite[Prop. 3.22]{Schilhan2020}.

\begin{lemma}\label{lem:bairelikescohen}
Let $M \preccurlyeq H(\theta)$ be countable, where $\theta$ is a ``large" regular cardinal, let $T,U,\kappa \in M$ be as in Definition~\ref{def:omegauniv} and let $G$ be generic over $M$ for a finite support product of Cohen forcing. Let $s \in \mathbb{C}$, $x \in M[G] \cap \omega^\omega$ and $\dot z \in M[G]$ be a $\mathbb{C}$-name for a real.\footnote{Formally, we should consider a forcing extension $N[G]$ of the transitive collapse $N$ of $M$ and translate the following statements according to the collapsing map. We omit this as is often usual in order to simplify the notation.} Then 
\begin{enumerate}
    \item $M[G] \models x \in p[T]$ iff $x \in p[T]$ and $M[G] \models x \in p[U]$ iff $x \in p[U]$,
    \item $M[G] \models `` s \Vdash \dot z \in p[\check T]"$ iff $s \Vdash \dot z \in p[\check T]$,
    \item for any continuous function $f \colon \omega^\omega \times \omega^\omega \to \omega^\omega$, the set $X = \{ y \in \omega^\omega : s \Vdash f(y,\dot z) \in p[\check T] \}$ is $\omega$-universally Baire.   
\end{enumerate}
\end{lemma}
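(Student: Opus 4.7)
The plan for (1) is to combine elementarity with upward $\mathbf\Sigma^1_1$-absoluteness. Since $M \preccurlyeq H(\theta)$ and $T, U, \kappa \in M$, the assertion $\Vdash_{\mathbb{C}} p[\check T] \cup p[\check U] = \omega^\omega$ from Def~\ref{def:omegauniv} holds in $M$. As any finite support product of Cohens inside the countable $M$ can be iterated one Cohen at a time (and $\omega$-uB-ness is preserved under iterated Cohen extensions), one obtains $M[G] \models p[T] \cup p[U] = \omega^\omega$. For $x \in M[G] \cap \omega^\omega \subseteq V$, membership in $p[T]$ is $\mathbf\Sigma^1_1$ with parameter $T$, hence upward absolute from $M[G]$ to $V$; the converse uses the disjointness $p[T]^V \cap p[U]^V = \emptyset$ from Def~\ref{def:omegauniv} together with the partition in $M[G]$, by the same absoluteness argument applied to $U$.

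For (2), the first observation is that any $\mathbb{C}$-generic $H$ over the countable $M[G]$ exists in $V$, and combining the two generics shows that $M[G][H]$ is again a finite-support Cohen extension of $M$ of the form covered by (1). Hence $M[G][H] \models \dot z^H \in p[T] \Leftrightarrow \dot z^H \in p[T]^V$, and symmetrically for $U$. The forcing relation ``$s \Vdash \dot z \in p[\check T]$'' for the $\mathbf\Sigma^1_1$ formula $\dot z \in p[\check T]$ is itself $\mathbf\Sigma^1_1$-definable from $s, \dot z, T$, so the forward direction transfers from $M[G]$ to $V$. For the converse, if $s \not\Vdash^{M[G]} \dot z \in p[\check T]$, then by the partition property in Cohen extensions of $M[G]$ some $s' \leq s$ satisfies $s' \Vdash^{M[G]} \dot z \in p[\check U]$; applying the forward direction to $U$ gives $s' \Vdash^V \dot z \in p[\check U]$ and, by the partition in $V$-Cohen extensions, $s \not\Vdash^V \dot z \in p[\check T]$.

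For (3), the plan is to produce explicit tree representations witnessing $\omega$-uB-ness of $X$. By the continuous reading of names for $\mathbb{C}$, fix a Borel function $F$ (in $V$) such that $\dot z^H = F(c_H)$ for the Cohen generic $c_H$ on a comeager set. By the partition property, $y \in X$ is equivalent to the dichotomy ``for every $s' \leq s$ there is $s'' \leq s'$ with $s'' \Vdash f(y, \dot z) \in p[\check T]$''. Combining $f$, $F$, and the trees $T, U$, construct trees $T^\ast, U^\ast$ on $\omega \times \kappa^\ast$ (for a suitable $\kappa^\ast$) whose branches over $y$ encode a run through the countable poset $\mathbb{C}$ below $s$, together with a $T$- respectively $U$-branch certifying $f(y, F(c)) \in p[T]$ or $p[U]$ along each selected generic $c$. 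The projections $p[T^\ast], p[U^\ast]$ will then be $X$ and its complement, and the Cohen-extension partition $\Vdash_\mathbb{C} p[T^\ast] \cup p[U^\ast] = \omega^\omega$ reduces to the corresponding property for $T, U$ applied pointwise to each $f(y, F(c))$.

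The main obstacle is the tree construction in (3): correctly encoding the universal-existential quantifier alternation over $\mathbb{C}$-conditions into a tree whose projection behaves as predicted in every Cohen extension, while keeping the coding of $\dot z$ absolute enough that it transfers from $M[G]$ to $V$ and to arbitrary further Cohen extensions. The standard remedy is to enumerate $\mathbb{C}$ (countable) and interleave the resulting branching with the $T$- and $U$-branches; the essential input is (2), which guarantees that the forcing relation involved in the definition of $X$ is the same whether computed in $M[G]$ or in $V$, so that the construction in $V$ faithfully captures the intended set.
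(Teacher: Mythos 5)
Your overall strategy matches the paper's: (1) is proved the same way (each real of $M[G]$ lives in a single Cohen extension $M[c]$, so elementarity gives the partition $p[T]\cup p[U]=\omega^\omega$ in $M[G]$, and upward $\mathbf\Sigma^1_1$-absoluteness plus disjointness finishes), the converse direction of (2) via passing to $p[\check U]$ is exactly the paper's argument, and your plan for (3) --- enumerate the conditions below $s$ and interleave that branching with branches of $T$ and $U$ --- is precisely the tree $T'$ on $\omega\times\omega^{<\omega}\times\kappa^{<\omega}\times\mathbb{C}$ that the paper builds explicitly. However, there is a genuine gap in your forward direction of (2). You justify the transfer of ``$s\Vdash \dot z\in p[\check T]$'' from $M[G]$ to $V$ by asserting that this forcing relation is $\mathbf\Sigma^1_1$-definable from $s,\dot z,T$, calling $\dot z\in p[\check T]$ a ``$\mathbf\Sigma^1_1$ formula''. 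It is not: $T$ is a tree on $\omega\times\kappa$ for an arbitrary ordinal $\kappa$, so $p[T]$ is only $\kappa$-Suslin, and the entire point of working with $\omega$-universally Baire sets rather than analytic ones is that no such $\mathbf\Sigma^1_1$ reduction is available. As written, this step fails for uncountable $\kappa$.

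The correct (and standard) repair, which is what the paper does, is to use fullness inside $M[G]$: if $M[G]\models ``s\Vdash \dot z\in p[\check T]"$, extract a single name $\dot w\in M[G]$ for a branch of $\kappa^\omega$ (a set of pairs $(t,\check u)$ with $u\in\kappa^{<\omega}$) such that $M[G]\models ``s\Vdash (\dot z,\dot w)\in[\check T]"$. The latter statement is a conjunction of local assertions --- every condition deciding initial segments of $\dot z$ and $\dot w$ decides them into $T$ --- and is therefore upward absolute to $V$; prefixing the existential over $\dot w$ keeps it upward absolute. Note also that your alternative route through $M[G]$-generics $H\in V$ cannot substitute for this: establishing $s\Vdash^V\dot z\in p[\check T]$ requires controlling filters generic over $V$, which are not among the $H\in V$ you quantify over. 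Finally, for (3) you correctly identify the construction and the role of (2), but you stop at a plan; the content of the paper's proof is the explicit definition of the nodes $(u,v,w,t)$ (with $t(i)\leq s_i$, $(v(i),w(i))\in T$, $t(i)\Vdash f''([u\restriction i]\times[\dot z\restriction i])\subseteq[v(i)]$, and coherence of the $w(i)$ along compatible $t(i)$), together with the verification that $p[T']$ and $p[U']$ remain complementary in Cohen extensions because $\mathbb{C}\times\mathbb{C}\cong\mathbb{C}$. Your detour through a Borel reading $F$ of $\dot z$ is unnecessary and only complicates that verification.
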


\begin{proof}
(1): $x \in p[T]$ whenever $M[G] \models x \in p[T]$ by an easy upwards absoluteness argument. For the other direction, let us note that $M[G] \models p[T] \cup p[U] = \omega^\omega$. Namely, whenever $x \in M[G]$, there is a Cohen real $c \in M[G]$ over $M$ such that $x \in M[c]$. According to Definition~\ref{def:omegauniv} and as $M$ is elementary, $M[c] \models x \in p[T] \cup p[U]$ and in particular $M[G] \models x \in p[T] \cup p[U]$. Whenever $x \in p[T]$, we must have that $M[G] \models x \in p[T]$. Otherwise $M[G] \models x \in p[U]$ and therefore $x \in p[U]$ contradicting that $p[T] \cap p[U] = \emptyset$. The argument is the same for $p[U]$. 

(2): Assume that $M[G] \models `` s \Vdash \dot z \in p[\check T]"$. Then there is a name $\dot w \in M[G]$ consisting of pairs $(t,\check u)$, where $t \in \mathbb{C}$ and $u \in \kappa^{<\omega}$, such that $M[G] \models `` s \Vdash (\dot z,\dot w) \in [\check T]"$. By an easy absoluteness argument $s \Vdash (\dot z,\dot w) \in [\check T]$ and in particular, $s \Vdash \dot z \in p[\check T]$. On the other hand, if $M[G] \models `` s \not\Vdash \dot z \in p[\check T]"$, there is $t \leq s$ such that $M[G] \models `` t \Vdash \dot z \in p[\check U]"$. By the same argument as before, $t \Vdash \dot z \in p[\check U]$ and in particular $s \not\Vdash \dot z \in p[\check T]$.

(3): Let $\langle s_n : n \in \omega \rangle$ enumerate all conditions below $s$. Consider a tree $T'$ on $\omega \times \omega^{<\omega} \times\kappa^{<\omega} \times \mathbb{C}$ consisting of nodes $(u,v,w,t)$ such that if $\vert u \vert = \vert v \vert = \vert w \vert = \vert t\vert = n$, then $t(i) \leq_{\mathbb{C}} s_i$, $w(i) \in \kappa^i$, $(v(i),w(i)) \in T$ and $t(i) \Vdash f''([u\restriction i]\times [\dot z \restriction i]) \subseteq [v(i)]$ for every $i < n$, and whenever $t(i) \subseteq t(j)$ for $i,j < n$, then $w(i) \subseteq w(j)$. It is easy to check that $y \in p[T']$ (the projection to the first coordinate) iff $s \Vdash f(y,\dot z) \in p[\check T]$. Moreover, this holds in any forcing extension of $V$. Similarly, construct for every $t \leq s$ a tree $U'_t$ such that $y \in p[U'_t]$ iff $t \Vdash f(x,\dot z) \in p[\check U] $. This easily gives rise to a tree $U'$ such that in any forcing extension of $V$, $x \in p[U']$ iff $\exists t \leq s( t \Vdash (x,\dot z) \in p[\check U])$. Since $\Vdash_{\mathbb{C}} p[\check U] = \omega^\omega\setminus p[\check T]$ we have that $x \in p[U']$ iff $s \not\Vdash f(x,\dot z) \in p[\check T]$. Also, the same holds in any Cohen forcing extension since forcing with $\mathbb{C}$ twice is the same as forcing with it once. Followingly, $X = p[T']$, $\omega^\omega \setminus X = p[U']$ and in any Cohen extension $p[T'] = \omega^\omega \setminus p[U']$. Thus $X$ is $\omega$-universally Baire as witnessed by $T'$ and $U'$.
\end{proof}

\subsection{Previous results}
In this section, we will review some of the results from \cite{Schilhan2020}, that we want to apply. The main ingredient in the proof of Theorem~\ref{thm:mainold} is the following remarkable property of the Sacks iteration.

\begin{prop}[see {\cite[Proposition~4.23]{Schilhan2020}}]
\label{prop:propheart}
    Let $\lambda$ be an ordinal. Let $E$ be an analytic hypergraph on $\omega^\omega$, $\dot y$ a $\mathbb{S}^{*\lambda}$-name and $\bar p \in \mathbb{S}^{*\lambda}$, $\bar p \Vdash \dot y \in \omega^\omega$. Then there is $\bar q \leq \bar p$ and a compact $E$-independent set $K \subseteq \omega^\omega$ such that 
	
	\begin{enumerate}
		\item either $\bar q \Vdash \dot y \in K$,
		\item or $\bar q \Vdash \{ \dot y \} \cup K \text{ is not } E\text{-independent}$.
	\end{enumerate}
\end{prop}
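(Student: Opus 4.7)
The plan is to reduce to a continuous setup via Lemma~\ref{lem:crninfinite} and then carry out a fusion in $\tilde{\mathbb{S}}^{*\lambda}$ that simultaneously shrinks the condition and builds the compact set $K$. Concretely, I first replace $\bar p$ by $(Y_0,D) \leq i_\lambda^{-1}(\bar p)$ together with a continuous $f \colon Y_0 \to \omega^\omega$ such that $i_\lambda(Y_0,D) \Vdash \dot y = f(\bar x_{\operatorname{gen}} \restriction D)$. Because $D$ is countable, $Y_0$ is a closed subset of the compact space $(2^\omega)^D$, so $Y_0$ is compact and $A_0 := f''Y_0$ is a compact subset of $\omega^\omega$. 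Using the homeomorphism $\Phi_{Y_0}$ of Definition~\ref{def:isom} I identify $Y_0$ with $(2^\omega)^\alpha$ (where $\alpha = \tp(D)$) and transfer the Sacks-iteration structure so that refining $(Y_0,D)$ corresponds to choosing continuously varying perfect subtrees at each coordinate in $D$.

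Next I run a standard $\tilde{\mathbb{S}}^{*\lambda}$-fusion below $(Y_0,D)$, producing a fusion sequence $(Y_n,D)$ with limit $(Y,D)$, while in parallel I grow a compact $E$-independent set $K \subseteq \omega^\omega$. At stage $n$, I enumerate the finitely many ``$n$-th level'' patches of $Y_n$ determined by the fusion, and at the same time the first $n$ levels of a tree representation of the analytic hypergraph $E$. For each finite configuration of basic open sets in $A_n = f''Y_n$ that might support an edge of $E$, I make a dichotomy: if I can shrink $Y_n$ without violating the fusion requirements so as to remove the entire configuration from $f''Y_{n+1}$, I do so; otherwise, I freeze finitely many witness points from $f''Y_n$ and add them to $K$, using the tree representation of $E$ to certify that $K$ remains $E$-independent after this addition. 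In the limit, $(Y,D)$ is a legitimate condition by the usual fusion calculus for $\tilde{\mathbb{S}}^{*\lambda}$, and $K$ is compact as a continuous image of a compact set carved out at the start of the fusion.

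Setting $\bar q = i_\lambda(Y,D)$, the dichotomy in the conclusion comes out as follows. Either the ``shrink'' option succeeded at every stage and $f''Y \subseteq K$, in which case $\bar q \Vdash \dot y = f(\bar x_{\operatorname{gen}} \restriction D) \in K$ and we are in case (1); or else at some stage shrinking failed, which by the analytic tree representation of $E$ means that for every $\bar x \in Y$, the value $f(\bar x)$ together with finitely many points already thrown into $K$ forms an edge of $E$, yielding $\bar q \Vdash \{\dot y\} \cup K$ is not $E$-independent, which is case (2).

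The main obstacle is the bookkeeping in the fusion: one has to interleave three requirements, namely (a) the Sacks-iteration fusion constraints on $(Y_n,D)$ (preserving perfectness and continuous dependence of the fiber trees $T_{\bar x}$), (b) decisiveness against all potential edges produced by the tree representation of $E$ on the current image $f''Y_n$, and (c) the $E$-independence of the growing $K$. Handling (a) is routine given the reformulation in Lemma~\ref{lem:crninfinite}; the delicate point is combining (b) and (c), where one must ensure that each ``absorb into $K$'' step is consistent with $E$-independence, and that the ``shrink $Y_n$'' option is taken whenever absorbing would destroy it. This is the central calculation of the proof and the place where continuity of $f$ and compactness of $Y_0$, together with the tree representation of the analytic hypergraph $E$, are used in an essential way.
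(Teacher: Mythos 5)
Your first step --- continuous reading of names via Lemma~\ref{lem:crninfinite}, viewing $Y_0$ as a compact subset of $(2^\omega)^D$, transporting everything through $\Phi_{Y_0}$ to $(2^\omega)^\alpha$ and pulling $E$ back --- is exactly how the paper sets things up (compare the proof of Proposition~\ref{prop:propheart2}, which is the same argument run for projective hypergraphs). The gap is in alternative (2). You propose that when shrinking fails you ``freeze finitely many witness points from $f''Y_n$ and add them to $K$,'' and you then infer that every $f(\bar x)$, $\bar x \in Y$, forms an edge with points already placed in $K$. Neither step survives scrutiny. The inference is a non sequitur: failure to shrink away a configuration only says that every admissible refinement still meets each basic open set of that configuration; it does not say that a \emph{fixed} finite set of ground model points completes an edge with \emph{every} value $f(\bar x)$. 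And no finite (or countable, scattered) set of frozen witnesses can work in general: take $E$ to be the Vitali hypergraph $\{\{x,y\} : x - y \in \mathbb{Q}\setminus\{0\}\}$ and $\dot y$ a name for a new real. No ground model real is rationally related to the generic value of $\dot y$, so the edge completing $f(\bar x)$ must involve points varying with $\bar x$, such as $f(\bar x)+1$. Accordingly, the correct $K$ in case (2) is an \emph{uncountable} compact set of the form $\bigcup_{j<N}(\phi_j\circ\Phi_X)''Y$ for finitely many continuous self-maps $\phi_j$ of $(2^\omega)^\alpha$; your construction cannot produce such a set, and the $K$ you do build (countably many finite batches) is not visibly compact nor visibly $E$-independent.

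The missing ingredient is precisely Lemma~\ref{lem:mainlemmainf} (that is, \cite[Prop.~3.22]{Schilhan2020}): a Baire-category dichotomy over a countable model $M$ which either guarantees that all strongly mutually Cohen generic tuples are $E'$-independent, or produces the maps $\phi_0,\dots,\phi_{N-1}$ and a basic open set $[\bar s]$ such that the $\phi_j$-images of mutually generic points form an independent set while each generic point makes an edge with its own images. This is the heart of the proof and it is not a consequence of fusion bookkeeping against a tree representation of $E$. In the actual argument the fusion is confined to Lemma~\ref{lem:gettingmcg}: one shrinks $(X,C)$ to $(Y,C)$ only so that all finite tuples from $\Phi_X''Y$ are strongly mutually Cohen generic over $M$, and then both alternatives of the proposition fall out by applying the dichotomy to those tuples ($K = f''Y$ in the first case, $K=\bigcup_{j<N}(\phi_j\circ\Phi_X)''Y$ in the second). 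Your case (1) could probably be pushed through by a direct fusion against the tree of $E$, but without the second half of the dichotomy the two alternatives do not exhaust the possibilities and conclusion (2) is not reached.
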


In Section~\ref{sec:genabs}, we will prove analogues of this for projective and $L(\mathbb{R})$ hypergraphs, that will be used in our main result. For this we need to first look at some key ingredients of Proposition~\ref{prop:propheart}. One of them is the notion of strong mutual Cohen genericity (mCg) over a given countable model of set theory $M$, which is a generalization of mutual genericity in Cohen forcing over $M$ relative to a sequence of Polish spaces such as $\langle 2^\omega : i < \alpha \rangle$. Essentially, $\bar x_0, \dots, \bar x_{n-1} \in (2^\omega)^\alpha$ are ($\langle 2^\omega : i < \alpha \rangle$)-mCg if they are concatenations of finitely many mutually added Cohen generics over $M$ in spaces of the form $(2^\omega)^{[\delta, \gamma)}$, for $\delta < \gamma < \alpha$. For example, when $\alpha = 2$ and $(c_0,c_1,c_2)$ is $(2^{<\omega})^3$-generic, then the sequences $(c_0,c_1)$, $(c_0,c_2)$ are mCg. The exact definition can be found in \cite[Section~3.3]{Schilhan2020}, but all that is relevant to us are the following two lemmas.

	\begin{lemma}
	\label{lem:mainlemmainf}
	Let $\alpha < \omega_1$ and $E$ be an $\omega$-universally Baire hypergraph on $(2^\omega)^\alpha$. Then there is a countable model $M$, $\alpha +1 \subseteq M$, so that either
	
	\begin{enumerate}
		\item for any $\bar x_0,\dots, \bar x_{n-1} \in (2^\omega)^\alpha$ that are strongly $\langle 2^\omega : i < \alpha \rangle$-mCg over $M$, $$\{\bar x_0,\dots, \bar x_{n-1}\} \text{ is } E \text{-independent}$$
	\end{enumerate}
	
	or for some $N\in \omega$,
	\begin{enumerate}
		\setcounter{enumi}{1}
		\item there are $\phi_0, \dots, \phi_{N-1} \colon (2^\omega)^\alpha \to (2^\omega)^\alpha$ continuous, $\bar s \in \bigotimes_{i<\alpha} 2^{<\omega}$ so that for any $\bar x_0,\dots, \bar x_{n-1} \in (2^\omega)^\alpha \cap [\bar s]$ that are strongly mCg over $M$,  $$\{\phi_j(\bar x_i) : j < N, i<n\} \text{ is } E \text{-independent but } \{ \bar x_0\} \cup \{ \phi_j(\bar x_0) : j < N \} \in E.$$
		\end{enumerate}
	
\end{lemma}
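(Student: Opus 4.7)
My approach is to take $M$ as a countable elementary submodel of $H(\theta)$ for sufficiently large regular $\theta$, containing $\alpha$ and trees $T, U$ that witness the $\omega$-universal Baireness of $E$ (one tree pair for each possible edge-size). The dichotomy then arises from asking, inside $M$, whether there is some $n \in \omega$ and some condition in the natural $n$-fold Cohen-type forcing adding a strongly mCg tuple in $(2^\omega)^\alpha$ that forces the generic $n$-tuple to contain an $E$-edge (as read off $p[T]$).

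If no such $n$ exists in $M$, case (1) follows. Indeed, any strongly mCg tuple $\bar x_0, \dots, \bar x_{n-1}$ over $M$ lies in a Cohen extension of the transitive collapse of $M$, so Lemma~\ref{lem:bairelikescohen} transports the absence of an $M$-internal witness into the genuine statement that no finite subset of $\{\bar x_0, \dots, \bar x_{n-1}\}$ lies in $E$. If, on the other hand, some $n$ arises, take the minimal such $n$ and a witnessing condition $\bar p \in M$. By minimality of $n$, the forced edge must be $\{\bar x_0, \dots, \bar x_{n-1}\}$ itself. Strengthening $\bar p$ so that each of its $n$ components extends a common $\bar s \in \bigotimes_{i<\alpha} 2^{<\omega}$ (using Definition~\ref{def:isom} to move between topological conditions and their tree counterparts), I exploit that a single Cohen generic in $[\bar s]$ decodes into $n$ mutually Cohen generic parts --- a decoding supported by the concatenation flexibility of \emph{strongly} mCg, as in the example preceding the statement --- to produce continuous $\phi_0, \dots, \phi_{N-1} \colon (2^\omega)^\alpha \to (2^\omega)^\alpha$ with $N = n - 1$ such that $(\bar x_0, \phi_0(\bar x_0), \dots, \phi_{N-1}(\bar x_0))$ is itself a strongly mCg $n$-tuple extending $\bar p$. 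Consequently $\{\bar x_0\} \cup \{\phi_j(\bar x_0) : j < N\} \in E$.

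The main obstacle is the independence half of case (2): for genuinely distinct strongly mCg inputs $\bar x_0, \dots, \bar x_{n-1} \in [\bar s]$, the big set $\{\phi_j(\bar x_i) : j < N, i < n\}$ must remain $E$-independent. Given a finite $F$ inside it, I would partition the elements by their source $\bar x_i$. If strictly fewer than $n$ distinct sources appear, $F$ is a continuous image of a strongly mCg $k$-tuple with $k < n$, so by the minimality of $n$ together with Lemma~\ref{lem:bairecontpre} (pulling the $p[T]$-test back through the continuous map), $F \notin E$. The delicate subcase is when all $n$ sources contribute: here one must ensure that no ``diagonal'' selection across distinct $\bar x_i$'s reproduces the concatenation pattern within $[\bar s]$ which forced the original edge. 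I would arrange this by designing the $\phi_j$'s so that decoded parts remain asynchronous across different $\bar x_i$'s, and I expect this bookkeeping to be the technical heart of the proof.
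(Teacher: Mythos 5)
The paper does not reprove this dichotomy: its proof is a citation of \cite[Proposition~3.22]{Schilhan2020}, with Lemma~\ref{lem:bairelikescohen} supplying exactly the properties of $\omega$-universally Baire sets needed to run that (originally analytic-case) argument over a countable model --- correctness of $p[T]$-membership and of forcing statements about $p[\check T]$ in $M[G]$, and crucially the closure of pullbacks like $\{y : s \Vdash f(y,\dot z)\in p[\check T]\}$ under $\omega$-universal Baireness, which is what allows the induction to continue. Your skeleton (minimize $n$ such that some condition in the $n$-fold mutual-Cohen forcing forces the generic tuple to contain an edge; absoluteness via $T,U\in M$ gives case (1) when no such $n$ exists, and forces the edge to be the full $n$-tuple when it does) is indeed the skeleton of the cited proof. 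But two essential steps are missing or wrong. First, your claim that $(\bar x_0,\phi_0(\bar x_0),\dots,\phi_{N-1}(\bar x_0))$ is ``itself a strongly mCg $n$-tuple extending $\bar p$'' cannot hold: its coordinates are continuous functions of one another, hence certainly not mutually generic over $M$ in any product sense, and no decoding of a single generic into mutually generic blocks changes this, since $\bar x_0$ itself must be a vertex of the edge. The edge $\{\bar x_0\}\cup\{\phi_j(\bar x_0):j<N\}\in E$ has to be produced differently: the forcing statement together with the Baire property of $E$ (via $T,U$) makes the edge relation comeager on a box, and one then uses Kuratowski--Ulam and a continuous selection on a comeager set to build the $\phi_j$; this is a genuinely different mechanism from genericity of the image tuple.

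Second, the independence half of case (2), which you defer as ``bookkeeping,'' is the actual content of the proposition and does not follow from minimality of $n$. Minimality says that no condition forces the generic $k$-tuple ($k<n$) to contain an edge; it says nothing about \emph{continuous images} of such tuples, and $\{\phi_j(\bar x_i) : j<N,\ i<k\}$ is an image, not a sub-tuple. To rule out edges among images you must pull $E$ back through the $\phi_j$'s, check the pullback is again $\omega$-universally Baire (this is what part (3) of Lemma~\ref{lem:bairelikescohen} is for), and rerun the dichotomy for the pullback --- an induction your sketch never sets up. That this is unavoidable is shown already by $E=[\,(2^\omega)^\alpha\,]^2$ (all pairs): there $n=2$, and the only way to satisfy case (2) is to take $\phi_0$ \emph{constant}, so no amount of ``asynchronous decoding'' of generics can work uniformly; the construction of the $\phi_j$ must itself be driven by a further minimization. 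As it stands, the proposal reproduces the easy half of the cited proposition and the correct framing of the hard half, but not a proof of it.
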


Here, $\bigotimes_{i<\alpha} 2^{<\omega}$ consists of the finite sequences $\bar s$ of elements of $2^{<\omega}$ indexed by ordinals in $\alpha$. The sets $[\bar s] := \{ \bar x \in (2^\omega)^\alpha : \forall \beta \in \dom( \bar s) (s(\beta) \subseteq x(\beta)) \}$ form a basis for the product topology on $(2^\omega)^\alpha$. $M$ is always thought to be an $\in$-model of some fragment of set theory that contains $\omega$, such as an elementary submodel of some $H(\theta)$. We can also just think of $M$ as being some countable set of dense subsets of finite support products of Cohen forcing, since this is all that Cohen genericity depends on.

\begin{proof}
This is Proposition~3.22 in \cite{Schilhan2020} in combination with Lemma~\ref{lem:bairelikescohen}.
\end{proof}

\begin{lemma}[see {\cite[Lemma 4.21]{Schilhan2020}}]\label{lem:gettingmcg}

Let $\lambda$ be an ordinal, $(X,C) \in \tilde{\mathbb{S}}^{*\lambda}$, $\bar s \in \bigotimes_{\beta < \tp(C)} 2^{<\omega}$ and $M$ a countable model containing $(X,C)$. Then there is $(Y,C) \leq (X,C)$ so that $\Phi_{X}''Y \subseteq [\bar s]$ and for any $\bar x_0, \dots, \bar x_{n-1} \in Y$, $\Phi_{X}(\bar x_0), \dots, \Phi_{X}(\bar x_{n-1}) $ are strongly mCg over $M$. 
\end{lemma}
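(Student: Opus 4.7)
The plan is to construct $(Y,C)$ as the limit of a fusion sequence $(Y_n,C)_{n\in\omega}$ in $\tilde{\mathbb{S}}^{*\lambda}$, starting from a trivial first reduction: by intersecting, for each $\beta\in\dom(\bar s)$, the perfect tree $T_{\bar y\restriction \iota(\beta)}$ with the subtree corresponding to $\eta_{T_{\bar y\restriction\iota(\beta)}}^{-1}([s(\beta)])$, one obtains $(Y_0,C)\leq (X,C)$ with $\Phi_{Y_0}''Y_0\subseteq[\bar s]$. Enumerate $\tp(C)=\{\beta_n:n\in\omega\}$, and enumerate as $\langle D_n:n\in\omega\rangle$ all dense open subsets of finite-support products $\prod_{\beta\in F}\mathbb{C}$, for finite $F\subseteq\tp(C)$, that lie in $M$; there are only countably many since $\tp(C)$ is countable and $M$ is countable. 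During the fusion, at stage $n$ I would maintain a condition $(Y_n,C)$ and a level $\ell_n$ so that for each splitting vector $t\in\prod_{i<n}2^{\ell_n}$ there is a designated refinement $(Y_n^{t},C)\leq(Y_n,C)$ in which the tree $T_{\bar y\restriction\iota(\beta_i)}$ is fixed up to level $\ell_n$ to pass through $t(i)$.

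The key refinement step, going from stage $n$ to $n+1$, is this: pick $\ell_{n+1}>\ell_n$ large enough that each tree at coordinate $\beta_i$ ($i\le n$) splits at least once between $\ell_n$ and $\ell_{n+1}$, so each $(Y_n^{t},C)$ breaks into several $(Y_{n+1}^{t'},C)$. Then, for every $k\leq n+1$ and every $k$-tuple of pairwise distinct splitting extensions $t_0',\ldots,t_{k-1}'$ at level $\ell_{n+1}$, I would shrink the conditions above them simultaneously so that in the resulting product, the $\Phi$-values of the $k$ branches on coordinates beyond the current stabilized initial segment are forced to meet each $D_m$ ($m\leq n$) whose domain lies in the appropriate interval of already-stabilized coordinates. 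The tool making this possible is a Mycielski-type argument: the Cohen generics over $M$ form a comeager subset of $(2^\omega)^F$, hence contain a Cantor set; continuity of $\eta_T$ in $T$ from Definition~\ref{def:isom} lets that Cantor set be realized as the $\Phi$-image of a perfect-tree refinement. Doing this for the finitely many tuples and dense sets in play at stage $n+1$ is a finite task.

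Letting $Y=\bigcap_n Y_n$, the tree-condition of $\tilde{\mathbb{S}}^{*\lambda}$ is preserved because the construction doubles the surviving splitting vectors at each new level, so all the relevant trees $T_{\bar y\restriction\iota(\beta)}$ remain perfect, and the continuity requirement is kept by always fusing uniformly over the finitely many splitting vectors. To verify strong mCg, given $\bar x_0,\ldots,\bar x_{k-1}\in Y$, follow each one through the fusion: the stages at which pairs separate partition $\tp(C)$ into finitely many successive intervals, and on each interval requirement (ii) of the refinement step guarantees that the corresponding pieces of $\Phi_X(\bar x_0),\ldots,\Phi_X(\bar x_{k-1})$ are concatenations of mutually Cohen generic reals over $M$, which is exactly the strong mCg decomposition. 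The main obstacle I expect is precisely the uniformity in the refinement step: at each fusion stage one must prepare for \emph{every} future finite tuple of branches (not only pairs), so the bookkeeping requires fixing, at stage $n$, the list of tuples and dense sets relevant by stage $n$ and refining uniformly; fortunately these lists are finite at each stage, so a straightforward countable-dovetailing of $n$, $k$, and $m$ handles it.
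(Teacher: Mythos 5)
First, a caveat on the comparison: the paper gives no proof of this lemma --- it is imported wholesale from \cite[Lemma~4.21]{Schilhan2020} --- so your sketch has to be measured against the argument there, which is indeed a fusion of exactly the shape you describe. Your overall architecture (prune into $[\bar s]$, enumerate the dense sets lying in $M$, fuse while meeting them for every finite tuple of already-separated branches, read off the block decomposition of strong mCg from the separation stages) is the right one and matches the source.

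That said, three points need repair before this is a proof. (i) Your ``splitting vectors'' $t\in\prod_{i<n}2^{\ell_n}$ treat the coordinates as independent, but in $\tilde{\mathbb{S}}^{*\lambda}$ the tree $T_{\bar x\restriction\iota(\beta)}$ depends on the branch chosen at all coordinates below $\beta$ in the \emph{ordinal} order of $C$, not in the order of your enumeration $\langle\beta_n : n\in\omega\rangle$; the designated refinements must therefore be indexed by finite partial branches through the iterated tree and reassembled into a single condition satisfying the continuity clause in the definition of $\tilde{\mathbb{S}}^{*\lambda}$. This bookkeeping is the actual content of the proof, and you leave it at the level of a gesture (the undefined ``requirement (ii)'' is a symptom). (ii) The Mycielski appeal is the wrong tool as stated: Mycielski yields \emph{some} perfect set of mutually generic tuples, but an arbitrary perfect subset of $(2^\omega)^F$ with $\vert F\vert\geq 2$ (e.g.\ the graph of a continuous function) contains the $\Phi_X$-image of no condition, so one cannot in general ``realize that Cantor set as the $\Phi$-image of a perfect-tree refinement''. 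What does work --- and what your setup already supports --- is meeting each dense open set directly, by extending finitely many stems of the finitely many designated branches, one dense set at a time; no Mycielski is needed. (iii) Two smaller slips: the blocks $[\delta,\gamma)$ of the mCg decomposition are in general infinite, so you must also enumerate the dense subsets in $M$ of finite-support products indexed by \emph{infinite} subsets of $\tp(C)$ (genericity for the full finite-support product over an infinite index set is strictly stronger than genericity of every finite subfamily); and the first reduction should be arranged so that $\Phi_X''Y_0\subseteq[\bar s]$ rather than $\Phi_{Y_0}''Y_0\subseteq[\bar s]$, since the homeomorphisms $\eta_T$ change when the trees shrink and the lemma is stated for $\Phi_X$.
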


Analogous results also hold for the finite products of Sacks forcing.

\section{Generic absoluteness}\label{sec:genabs}

\begin{definition}
Let $\Gamma$ be a pointclass. The \emph{comeager uniformization principle} for $\Gamma$, denoted $\CU(\Gamma)$,  states that for every set $X \subseteq \omega^\omega \times \omega^\omega$ in $\Gamma$ with non-empty sections there is a comeager set $B \subseteq \omega^\omega$ and a continuous function $f \colon B \to \omega^\omega$ such that $\forall x \in B ((x, f(x)) \in X)$.  
\end{definition}

Note that the $\CUP$ implies that every projective set of reals is Baire measurable. Namely, whenever $X \subseteq \omega^\omega$ is arbitrary, consider it's characteristic function $F \colon \omega^\omega \to 2$. Then there is a comeager set $B$ such that $f := F \restriction B$ is continuous on $B$. Moreover we can assume that $B$ is $G_\delta$ and thus $A := f^{-1}(1)$ is Borel and $A \triangle X \subseteq \omega^\omega \setminus B$ is meager. In his seminal paper \cite{Shelah84}, Shelah showed that $\CUP$ is consistent relative to $\ZFC$.

In the following, we say that $\mathbb{P}$-projective absoluteness holds, for some arbitrary forcing notion $\mathbb{P}$, if projective formulas are absolute between $V$ and $V^{\mathbb{P}}$.

\begin{lemma}\label{lem:absproj}
Assume that $\CUP$ holds. Then $\mathbb{P}$-projective absoluteness holds, where $\mathbb{P}$ is a countable support iteration of Sacks forcing. 
\end{lemma}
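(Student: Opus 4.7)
The plan is to prove by induction on $n$ that $\mathbb{P}$-$\Sigma^1_n$-absoluteness holds, where $\mathbb{P} = \mathbb{S}^{*\lambda}$. The base case $n=1$ is Shoenfield absoluteness. At the inductive step, assume $\Pi^1_n$-absoluteness. The upward direction of $\Sigma^1_{n+1}$-absoluteness is immediate: if $V \models \exists y\,\psi(x,y)$ with $\psi \in \Pi^1_n$, pick a witness $y \in V$ and apply upward $\Pi^1_n$-absoluteness.

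For the downward direction, suppose $V^{\mathbb{P}} \models \exists y\,\psi(x,y)$ with $\psi \in \Pi^1_n$ and $x \in V$, witnessed by $\bar p \Vdash \psi(\check x, \dot y)$. By the continuous reading of names (Lemma~\ref{lem:crninfinite}), together with reparameterization via $\Phi_Y$, I obtain $(Y, D) \leq \bar p$ in $\tilde{\mathbb{S}}^{*\lambda}$ and a continuous $\hat f \colon (2^\omega)^\alpha \to \omega^\omega$, where $\alpha = \tp(D)$, such that $(Y, D) \Vdash \psi(\check x, \hat f(\Phi_Y(\bar x_{\operatorname{gen}} \restriction D)))$. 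The task reduces to producing $\bar z_0$ in the $\mathbf\Pi^1_n$-set $B := \{\bar z : \psi(x, \hat f(\bar z))\}$ inside $V$, since then $y_0 := \hat f(\bar z_0) \in V$ witnesses $V \models \exists y\,\psi(x,y)$. Since $\CUP$ implies $B$ has the Baire property, it suffices to show $B$ is non-meager in $V$: then $B$ would be comeager on some basic open set and Baire category in $V$ supplies $\bar z_0$.

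To rule out $B$ meager, suppose for contradiction $B \subseteq \bigcup_k F_k$ for closed nowhere dense $F_k \in V$. Since $\CUP$ also makes $B$ $\omega$-universally Baire via trees $T, U \in V$, pick a countable $M \preccurlyeq H(\theta)$ containing $T, U, \hat f, x, (Y, D)$ and the $F_k$. Lemma~\ref{lem:gettingmcg} produces $(Z, D) \leq (Y, D)$ so that in $V^{\mathbb{P}}$ the real $\bar z_{\operatorname{gen}} := \Phi_Y(\bar x_{\operatorname{gen}} \restriction D)$ is Cohen-generic over $M$; consequently $\bar z_{\operatorname{gen}}$ avoids $\bigcup_k F_k$, and by $\omega$-universal Baireness of $B$ inside the Cohen extension $M[\bar z_{\operatorname{gen}}]$ combined with Lemma~\ref{lem:bairelikescohen}(1), also $\bar z_{\operatorname{gen}} \notin p[T]^{V^{\mathbb{P}}}$. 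On the other hand, $(Z, D) \Vdash \psi(x, \hat f(\bar z_{\operatorname{gen}}))$; the desired contradiction is that this should imply $\bar z_{\operatorname{gen}} \in p[T]^{V^{\mathbb{P}}}$. The hard part is precisely this last implication: the straightforward $\Pi^1_n$-absoluteness from the inductive hypothesis gives the equivalence $\psi \leftrightarrow p[T]$ only for $V$-reals, and refining this to the new real $\bar z_{\operatorname{gen}}$ requires working inside $M[\bar z_{\operatorname{gen}}]$ and invoking Lemma~\ref{lem:bairelikescohen}(2) together with a careful identification of $M[\bar z_{\operatorname{gen}}]$ as a Cohen extension of $M$ inside $V^{\mathbb{P}}$.
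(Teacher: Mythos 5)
Your reduction to finding a point of $B:=\{\bar z : \psi(x,\hat f(\bar z))\}$ in $V$ is reasonable, but the argument for non-meagerness breaks at exactly the step you flag, and this is not a technicality that Lemma~\ref{lem:bairelikescohen} can patch: it is the entire content of the lemma being proved. The trees $T,U$ witnessing that $B$ is $\omega$-universally Baire are manufactured from the Baire property of $B$ (a Borel set $A$ with $A\triangle B$ meager); in a Cohen extension of $M$, or in $V^{\mathbb{P}}$, the projection $p[T]$ is the \emph{canonical} lift determined by that Borel approximation, and nothing ties it to the set $\{\bar z : \psi(x,\hat f(\bar z))\}$ as recomputed there. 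Concretely, the direction you need, ``$\psi(x,\hat f(\bar z))\rightarrow \bar z\in p[T]$ for all $\bar z$'', is (even when $T$ is a tree on $\omega\times\omega$) a $\mathbf\Pi^1_{n+1}$ sentence, one level above what the inductive hypothesis transfers; for $\kappa>\omega$ it is not projective at all. Lemma~\ref{lem:bairelikescohen}(1)--(2) only move membership in $p[T]$ between $M[G]$ and $V$; they never reconnect $p[T]$ to the formula $\psi$ at a new real. So the argument is circular as written: extending ``$\psi\leftrightarrow p[T]$'' to $\bar z_{\operatorname{gen}}$ is itself a generic absoluteness statement of the kind being established.

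The gap can be closed, but by a different mechanism --- the one the paper uses --- namely the full comeager \emph{uniformization}, not merely the Baire property. Suppose $B=\emptyset$ in $V$, i.e.\ $\forall\bar z\,\exists w\,\theta(x,\hat f(\bar z),w)$, where $\neg\psi=\exists w\,\theta$ with $\theta\in\mathbf\Pi^1_{n-1}$. Apply $\CU$ to obtain a comeager $G_\delta$ set $B'$ and a continuous $g$ on $B'$ with $\forall\bar z\in B'\,\theta(x,\hat f(\bar z),g(\bar z))$. This is a single $\mathbf\Pi^1_{n-1}$ sentence with parameters in $V$ (codes for $B'$, $g$, $\hat f$), hence absolute to $V^{\mathbb{P}}$ by the inductive hypothesis. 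Lemma~\ref{lem:gettingmcg} forces $\bar z_{\operatorname{gen}}$ into $B'$, a ground-model-coded comeager Borel set, so instantiating the transferred sentence at $\bar z_{\operatorname{gen}}$ gives $\neg\psi(x,\hat f(\bar z_{\operatorname{gen}}))$ in $V^{\mathbb{P}}$, contradicting $(Y,D)\Vdash\psi(x,\hat f(\bar z_{\operatorname{gen}}))$. The point is that the continuous selector converts the problematic inner existential quantifier into a term that can be evaluated at the new real, so that only a sentence of strictly lower complexity has to be transferred; no tree representation of $B$ and no detour through non-meagerness is needed.
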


\begin{proof}
Let us consider $\mathbb{P} = \mathbb{S}^{*\lambda}$. By Shoenfield's absoluteness theorem $\mathbf{\Sigma}^1_2$ formulas are absolute between $V$ and $V^\mathbb{P}$. Assume we have shown that $\mathbf{\Sigma}^1_n$ formulas are absolute for some $n \geq 2$. Consider an arbitrary $\mathbf{\Sigma}^1_{n+2}$ formula '$\exists x \in \omega^\omega \forall y\in \omega^\omega \varphi(x,y)$' where $\varphi(x,y)$ is $\mathbf{\Sigma}^1_{n}$ and assume that $\bar p \Vdash \exists x\in \omega^\omega \forall y\in \omega^\omega \varphi(x,y)$. Then there is a $\mathbb{P}$-name $\dot x$ such that $\bar p \Vdash \forall y\in \omega^\omega \varphi(\dot x, y)$. By the continuous reading of names there is $(X,C) \in \tilde{\mathbb{S}}^{*\lambda}$ and a continuous function $f \colon X \to \omega^\omega$ such that $i_\lambda(X,C) \leq \bar p$ and $ i_\lambda(X,C) \Vdash \dot x = f(\bar{x}_{\operatorname{gen}} \restriction C)$. Suppose towards a contradiction that in $V$, $\forall x \in \omega^\omega\exists y\in \omega^\omega \neg\varphi(x,y)$. Let $\alpha := \tp(C)$. Then in particular, $\forall \bar x \in (2^\omega)^{\alpha} \exists y\in \omega^\omega \neg \varphi(f(\Phi_{X}^{-1}(\bar x)), y)$. By $\CUP$ there is a comeager $G_\delta$ set $B \subseteq (2^\omega)^\alpha$ and a continuous function $g \colon B \to \omega^\omega$ such that $\forall \bar x \in B \neg \varphi(f(\Phi_{\bar q}^{-1}(\bar x)), g(\bar x))$. If $M$ is a countable model containing $B$, then by Lemma~\ref{lem:gettingmcg} there is a condition $(Y,C) \leq (X,C)$ so that for any $\bar x \in Y$, $\Phi_{X}(\bar x)$ is Cohen generic over $M$ and thus is a member of $B$. Note that `$\forall \bar x \in B \neg \varphi(f(\Phi_{X}^{-1}(\bar x)), g(\bar x))$' is $\mathbf\Pi^1_n$ and thus absolute between $V$ and $V^\mathbb{P}$, by the inductive assumption. In particular, in $V^\mathbb{P}$, $\neg\varphi(f(\Phi_{X}^{-1}(\Phi_{X}(\bar x_{\operatorname{gen}} \restriction C))), g(\Phi_{X}(\bar x_{\operatorname{gen}} \restriction C)))$, whereby $\neg\varphi(f(\bar x_{\operatorname{gen}} \restriction C), g(\Phi_{X}(\bar x_{\operatorname{gen}} \restriction C)))$, and thus $\exists y\in \omega^\omega \neg \varphi(\dot x^G,y)$. This contradicts $i_\lambda(Y,C) \Vdash \forall y\in \omega^\omega \varphi(\dot x, y)$. 
\end{proof}

\begin{definition}
Let $V \subseteq W$ be transitive models of $\ZF$ with the same ordinals. Then we say that $L(\mathbb{R})$-absoluteness holds between $V$ and $W$ ($V \preccurlyeq_{L(\mathbb{R})} W$) if for every formula $\varphi$ with ordinals and reals from $V$ as parameters, $$L(\mathbb{R})^V \models \varphi \text{ iff } L(\mathbb{R})^W \models \varphi.$$ 
\end{definition}

Let us recall that every set in $L(\mathbb{R})$ is definable in $L(\mathbb{R})$ using a formula that has reals and ordinals as the only parameters.\footnote{Namely, $\HOD(\mathbb{R})^{L(\mathbb{R})} = L(\mathbb{R})$ as $L(\mathbb{R})$ is the smallest inner model containing all reals.}

\begin{definition}
Let $\kappa$ be an inaccessible cardinal. Then $\Coll(\omega, <\kappa) := \prod_{\delta < \kappa}^{<\omega} \delta^{<\omega} = \{ p : \dom p \in [\kappa]^{<\omega}, \forall \delta \in \dom p (p(\delta) \in \delta^{<\omega}) \}$ is the \emph{Lévy-collapse of $\kappa$}. Similarly we define $\Coll(\omega, < \gamma)$ and $\Coll(\omega, [\delta, \gamma))$ for any ordinals $\delta < \gamma$.
\end{definition}

Solovay showed that whenever $\kappa$ is inaccessible in $L$ and $H$ is $\Coll(\omega,<\kappa)$-generic over $L$, in $L(\mathbb{R})^{L[H]}$, $\ZF + \DC$ holds and every set of reals has the Baire property. We will show that $L(\mathbb{R})$-absoluteness holds between $L[H]$ and an extension by the Sacks iteration or product. Throughout the rest of this section we fix an inaccessible cardinal $\kappa$ in $L$.

\begin{definition}
Let $W$ be an inner model of $\ZF$. Then we say that $W$ is \emph{Solovay-like} if for every $\delta < \kappa$ and every real $r \in W$, there is $\gamma \in [\delta, \kappa)$ and a $\Coll(\omega, <\gamma)$-generic $K \in W$ over $L$ such that $r \in L[K]$.
\end{definition}

A similar definition appears in \cite{BagariaBosch2004}.

\begin{lemma}
$W$ is Solovay-like iff there is a $\Coll(\omega, <\kappa)$ generic $H$ over $L$ (possibly outside of $W$), such that $L(\mathbb{R})^W  = L(\mathbb{R})^{L[H]}$.
\end{lemma}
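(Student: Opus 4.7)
My plan is to prove the two directions separately. The backward direction is a short absoluteness argument; the forward direction requires an inductive construction of $H$ in the ambient universe $V$, whose main technical content is a Solovay-style absorption argument in the successor steps.

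For the $\Leftarrow$ direction, suppose $H$ is $\Coll(\omega,<\kappa)$-generic over $L$ with $L(\mathbb{R})^W = L(\mathbb{R})^{L[H]}$. Fix $r \in W \cap \mathbb{R}$ and $\delta < \kappa$. Since $\mathbb{R}^W = \mathbb{R}^{L[H]}$, we have $r \in L[H]$, and by the $\kappa$-cc of $\Coll(\omega,<\kappa)^L$ (using inaccessibility of $\kappa$ in $L$) there is $\gamma_0 < \kappa$ with $r \in L[H \cap \Coll(\omega,<\gamma_0)]$. Take $\gamma := \max(\delta,\gamma_0)+1 < \kappa$. In $L[H]$, $\gamma$ is countable, so the filter $H \cap \Coll(\omega,<\gamma)$ is coded by a real of $L[H]$. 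The assertion ``there exists a real $c$ coding a $\Coll(\omega,<\gamma)$-generic filter $K$ over $L$ with $r \in L[K]$'' is first-order in $L(\mathbb{R})$ with parameters $r$ and $\gamma$ (note that $\Coll(\omega,<\gamma) \in L \subseteq L(\mathbb{R})$, and genericity of $K$ over $L$ is expressible there by quantifying over dense subsets in $L$), and it holds in $L(\mathbb{R})^{L[H]}$. By hypothesis it also holds in $L(\mathbb{R})^W$, yielding a witness $c \in W$, and then $K \in L[c] \subseteq W$ as required.

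For the $\Rightarrow$ direction, assuming $W$ is Solovay-like I plan to construct, in $V$, $H = \bigcup_{\alpha<\kappa} H_\alpha$ where each $H_\alpha \in W$ is $\Coll(\omega,<\beta_\alpha)$-generic over $L$, the $\beta_\alpha$ are strictly increasing and cofinal in $\kappa$, and $H_{\alpha'} = H_\alpha \restriction \Coll(\omega,<\beta_{\alpha'})$ for $\alpha' < \alpha$. By inaccessibility of $\kappa$ in $L$, there are only $\kappa$ many dense subsets of $\Coll(\omega,<\kappa)$ in $L$, which I enumerate in $V$ as $\langle D_\eta : \eta < \kappa \rangle$. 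A bookkeeping argument using Solovay-likeness (each real of $W$ lies in some $L[K]$ for $K \in W$ generic for $\Coll(\omega,<\gamma)$ with $|\mathbb{R}^{L[K]}|^V = \gamma^{+L} < \kappa$, and the number of such $K$'s is $\leq \kappa$) shows $|\mathbb{R}^W|^V \leq \kappa$, allowing an enumeration $\langle r_\xi : \xi < \theta \rangle$ with $\theta \leq \kappa$. At each successor stage $\alpha+1$, I extend $H_\alpha$ to $H_{\alpha+1} \in W$ so as to meet the next dense set and absorb $r_\alpha$ into $L[H_{\alpha+1}]$; at limits I take unions. Then $H$ is $\Coll(\omega,<\kappa)$-generic over $L$ (because it meets all $D_\eta$), and $\mathbb{R}^{L[H]} = \mathbb{R}^W$: the inclusion $\supseteq$ follows since every $r_\xi \in L[H_\alpha] \subseteq L[H]$ for some $\alpha$, while $\subseteq$ follows from $L[H_\alpha] \subseteq W$ (valid because $H_\alpha \in W$). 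This gives $L(\mathbb{R})^{L[H]} = L(\mathbb{R})^W$, since $L(\mathbb{R})$ depends only on the reals.

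The main obstacle is the successor step: given $H_\alpha \in W$ a $\Coll(\omega,<\beta_\alpha)$-generic over $L$ and a target real $r \in W$, produce $H_{\alpha+1} \in W$ extending $H_\alpha$, generic for $\Coll(\omega,<\gamma')$ for some $\gamma' \in (\beta_\alpha,\kappa)$, with $r \in L[H_{\alpha+1}]$. Solovay-likeness applied with $\delta := \beta_\alpha$ produces $\gamma' \in [\beta_\alpha, \kappa)$ and $K \in W$ a $\Coll(\omega,<\gamma')$-generic over $L$ with $r \in L[K]$, but in general $K \restriction \beta_\alpha \neq H_\alpha$, so $K$ cannot be used directly. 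The resolution is a Solovay-style absorption performed inside $W$: since $K \in W$ collapses $\gamma'$, the tail poset $\Coll(\omega,[\beta_\alpha,\gamma'))$ is countable in $W$, so in $W$ I can enumerate the countably many dense subsets of this tail that lie in $L[H_\alpha]$ and construct a $\Coll(\omega,[\beta_\alpha,\gamma'))$-generic $K'$ over $L[H_\alpha]$ inside $W$. Using the homogeneity of the Lévy collapse, $K'$ can be arranged so that $r \in L[H_\alpha \cup K']$ (possibly after enlarging $\gamma'$ to make enough room for the absorption). Setting $H_{\alpha+1} := H_\alpha \cup K'$, further extended if necessary to meet the next $D_\eta$, completes the successor step, and together with the limit clause completes the construction.
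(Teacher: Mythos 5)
Your backward direction is fine (the paper dismisses it as ``clear''), and your overall strategy for the forward direction --- an increasing coherent chain of collapse generics in $W$ that absorbs the reals of $W$ one at a time --- is exactly the paper's. But the execution has a genuine gap at the limit stages of your length-$\kappa$ recursion. If $\alpha$ is a limit, $\beta_\alpha=\sup_{\alpha'<\alpha}\beta_{\alpha'}$ is in general not inaccessible in $L$, and a coherent increasing union of $\Coll(\omega,<\beta_{\alpha'})$-generics need \emph{not} be $\Coll(\omega,<\beta_\alpha)$-generic over $L$: writing $\Coll(\omega,<\beta_\alpha)$ as a finite-support product of blocks $\Coll(\omega,[\beta_{\alpha'},\beta_{\alpha'+1}))$ and applying to each block of a true generic the $L$-definable automorphism swapping two values at the first position of the $\beta_{\alpha'}$-coordinate, one obtains a coherent chain of genuine generics whose union misses the dense set of conditions taking the swapped-away value in some block. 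Bookkeeping cannot repair this, since there are at least $\beta_\alpha>|\alpha|$ many maximal antichains of $\Coll(\omega,<\beta_\alpha)$ in $L$ not contained in any proper initial segment. Since your successor step needs $L[H_\alpha]$ to be a generic extension of $L$ by a poset small in $L$ in order to run the absorption, the construction breaks at the first limit. The paper avoids this entirely by passing to an outer extension of $W$ in which $\kappa$ and $\mathbb{R}\cap W$ are countable, so the recursion has length $\omega$ with no intermediate limits, and genericity of the final union follows from the $\kappa$-cc of $\Coll(\omega,<\kappa)^L$ alone (every maximal antichain in $L$ is contained in some $\Coll(\omega,<\delta_n)$). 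This is precisely why the statement allows $H$ to live outside of $W$; insisting on building $H$ of length $\kappa$ inside the ambient universe is what creates the obstruction.

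The successor step is also not quite right as described. Enumerating the dense subsets of the tail $\Coll(\omega,[\beta_\alpha,\gamma'))$ lying in $L[H_\alpha]$ and building some generic $K'$ by hand gives no control over which reals land in $L[H_\alpha][K']$; homogeneity will not place $r$ there. The correct mechanism, which is the paper's, is: since $\beta_\alpha$ is countable in $L[H_\alpha]\subseteq W$, the pair $(H_\alpha,r)$ is coded by a single real $s\in W$; apply Solovay-likeness to $s$ to get a genuine $\Coll(\omega,<\gamma')$-generic $K\in W$ over $L$ with $s\in L[K]$ and $\vert\gamma'\vert^L>\beta_\alpha$; then by the L\'evy--Solovay absorption theorem $L[K]$ is a $\Coll(\omega,[\beta_\alpha,\gamma'))$-generic extension of $L[H_\alpha]$, say by $K'$, and $H_{\alpha+1}:=H_\alpha\cup K'$ satisfies $L[H_{\alpha+1}]=L[K]\ni r$. (A smaller point: in $L$ there are $(2^\kappa)^L$ dense subsets of $\Coll(\omega,<\kappa)$, not $\kappa$; what you need is that the $\kappa$ many maximal antichains, each bounded, suffice for genericity.)
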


\begin{proof}
Work in an extension of $W$ in which $\kappa$ and $\mathbb{R} \cap W$ are countable. Enumerate all reals $\langle r_n : n \in \omega \rangle$ of $W$ and let $\langle \delta_n : n \in \omega \rangle$ be cofinal in $\kappa$. We recursively find $\gamma_n > \delta_n$ and a $\Coll(\omega, < \gamma_n)$ generic $H_n \in W$ over $L$ such that $r_n \in L[H_n]$ and $H_n \subseteq H_{n+1}$, for every $n \in \omega$. Given $H_n$, we have that $H_n, r_n \in L[r]$ for some real $r \in W$, as $H_n$ is coded over $L$ by a single real in $W$ ($\gamma_n$ is collapsed to $\omega$). As $W$ is Solovay-like we find $\gamma_{n+1} > \delta_{n+1}$, $\vert \gamma_{n+1} \vert^L > \gamma_n$ and a $\Coll(\omega, < \gamma_{n+1})$-generic $K$ over $L$ such that $r \in L[K]$. Since the quotient of $\Coll(\omega,<\gamma_{n+1})$ by any forcing of size smaller than $\gamma_{n+1}$ is isomorphic to $\Coll(\omega, <\gamma_{n+1}) \cong \Coll(\omega, [\gamma_n,\gamma_{n+1}))$ (see \cite{Jech2003}), we find a $\Coll(\omega, < \gamma_{n+1})$ generic $H_{n+1} \in W$ that extends $H_n$ and such that $L[H_{n+1}] = L[K]$. Note that whenever $A \subseteq \Coll(\omega,<\kappa)$ is a maximal antichain in $L$, then $A \subseteq \Coll(\omega, <\delta_n)$ for some $n \in \omega$, by the $\kappa$-cc of $\Coll(\omega, < \kappa)$. In particular, $H := \bigcup_{n \in \omega} H_n$ is $\Coll(\omega, < \kappa)$-generic over $L$. Any real is added in an initial segment of the forcing. Thus it is clear that $\mathbb{R} \cap W = \mathbb{R} \cap L[H]$ and in particular, $L(\mathbb{R})^W  = L(\mathbb{R})^{L[H]}$. The other direction of the lemma is clear.
\end{proof}

\begin{lemma}
Let $V \subseteq W$ be both Solovay-like. Then $V \preccurlyeq_{L(\mathbb{R})} W$.
\end{lemma}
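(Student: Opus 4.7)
The plan is to apply the preceding lemma in order to reduce the statement to a comparison between two Lévy-collapse extensions of $L$, and then invoke Solovay's homogeneity analysis of the Lévy collapse.

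First, by the preceding lemma, I would fix $\Coll(\omega,<\kappa)$-generic filters $H_V$ and $H_W$ over $L$ (possibly outside of $V$ and $W$) with $L(\mathbb{R})^V = L(\mathbb{R})^{L[H_V]}$ and $L(\mathbb{R})^W = L(\mathbb{R})^{L[H_W]}$. Since the reals of any transitive class model coincide with $\mathbb{R} \cap L(\mathbb{R})$ of that model, these equalities force $\mathbb{R}^V = \mathbb{R}^{L[H_V]}$ and $\mathbb{R}^W = \mathbb{R}^{L[H_W]}$. In particular, any real parameter $r \in \mathbb{R}^V \subseteq \mathbb{R}^W$ lies in both $L[H_V]$ and $L[H_W]$. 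The statement $V \preccurlyeq_{L(\mathbb{R})} W$ then reduces to showing that for every formula $\varphi$, every real $r \in \mathbb{R}^V$, and every tuple of ordinals $\vec\alpha$,
$$L(\mathbb{R})^{L[H_V]} \models \varphi(r,\vec\alpha) \iff L(\mathbb{R})^{L[H_W]} \models \varphi(r,\vec\alpha).$$

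For this I would invoke a standard Solovay-style homogeneity result: whenever $H$ is $\Coll(\omega,<\kappa)$-generic over $L$ and $r \in L[H]$ is a real, the truth value of $\varphi(r,\vec\alpha)$ in $L(\mathbb{R})^{L[H]}$ depends only on $r$, $\vec\alpha$, and $\varphi$ (and the fixed ground model $L$), not on $H$. Concretely, pick $\gamma < \kappa$ with $r \in L[H \restriction \gamma]$. The tail $H \restriction [\gamma,\kappa)$ is $\Coll(\omega,[\gamma,\kappa))$-generic over $L[H \restriction \gamma]$; in $L[H \restriction \gamma]$, this poset is forcing-isomorphic to $\Coll(\omega,<\kappa)$, so $L[H]$ is a Solovay extension of $L[H \restriction \gamma]$. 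By almost-homogeneity of the Lévy collapse over $L[H \restriction \gamma]$, the truth of $\varphi(r,\vec\alpha)$ in $L(\mathbb{R})^{L[H]}$ is determined by the boolean value $\llbracket \dot L(\mathbb{R}) \models \varphi(\check r,\check{\vec\alpha}) \rrbracket$ computed in $L[H \restriction \gamma]$. A further step shows this value is already determined by $L[r]$, using that every extra real in $L[H \restriction \gamma] \setminus L[r]$ is itself $\Coll(\omega,<\gamma)$-generic over $L$ and so, by another application of almost-homogeneity, is absorbed. Applying this with $H = H_V$ and $H = H_W$ yields the required equivalence.

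The main obstacle is the final reduction: showing that the boolean value defining the truth of $\varphi(r,\vec\alpha)$ depends only on $r$, rather than on the choice of the intermediate model $L[H \restriction \gamma] \supseteq L[r]$. This is the heart of Solovay's original analysis: one must verify that every set of reals in $L(\mathbb{R})^{L[H]}$ has a definition from a real and an ordinal which is invariant under the automorphisms of $\Coll(\omega,<\kappa)$ that fix $r$, and then exploit that the "excess" generic information in $H \restriction \gamma$ beyond $r$ is itself $\Coll$-generic over $L$ and therefore invisible to this invariant description.
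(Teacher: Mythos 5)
Your proposal is correct and follows essentially the same route as the paper: reduce via the preceding lemma to comparing Lévy-collapse extensions of $L$, then use Solovay's factorization and homogeneity analysis over $L[r]$ to see that the truth of ``$L(\mathbb{R}) \models \varphi(r,\vec\alpha)$'' is decided by $\mathbbm{1}_{\Coll(\omega,<\kappa)}$ over $L[r]$ and hence is the same in both models. The paper is exactly as terse as you are about the homogeneity/absorption step (it cites ``well-known homogeneity and factorization arguments''), so the level of detail you supply is adequate.
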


\begin{proof}
Let $r \in V$ and $\varphi$ be a formula with $r$ and ordinals as parameters and assume that $L(\mathbb{R})^V \models \varphi$. By the previous lemma there is a generic $\Coll(\omega, <\kappa)$-generic $H$ over $L$ such that $L(\mathbb{R})^V = L(\mathbb{R})^{L[H]}$. By well-known homogeneity and factorization arguments for the Lévy-collapse, we have that $$L[r] \models \mathbbm{1}\Vdash_{\Coll(\omega, < \kappa)} ``L(\mathbb{R}) \models \varphi".$$
Again, by the previous lemma and standard factorization arguments, $L(\mathbb{R})^W = L(\mathbb{R})^{L[r][H']}$ for some $\Coll(\omega, < \kappa)$-generic $H'$ over $L[r]$. In particular, $L(\mathbb{R})^{W}\models \varphi.$\end{proof}

\begin{lemma}\label{lem:sacksissolovay}
Let $V$ be a $\Coll(\omega, <\kappa)$-generic extension of $L$ and let $W$ be an extension of $V$ by a countable support iteration of Sacks forcing. Then $W$ is Solovay-like. In particular, $V \preccurlyeq_{L(\mathbb{R})} W$.
\end{lemma}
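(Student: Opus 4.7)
My plan is to show $W$ is Solovay-like by reducing, via continuous reading of names, to the absorption of a single real into a Lévy-collapse generic; the ``in particular'' clause then follows from the previous lemma applied to $V \subseteq W$ (noting that $V = L[H]$ is trivially Solovay-like by the $\kappa$-c.c.\ of $\Coll(\omega, <\kappa)$).

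Given $r \in W$ and $\delta < \kappa$, I first apply Lemma~\ref{lem:crninfinite} to the $\mathbb{S}^{*\lambda}$-name for $r$ to produce a countable $C \subseteq \lambda$, a condition $(Y, C) \in G \cap \tilde{\mathbb{S}}^{*\lambda}$, and a continuous $f \colon Y \to \omega^\omega$ with $r = f(x^*)$, where $x^* := \bar x_{\operatorname{gen}} \restriction C$. Composing with the canonical homeomorphism from Definition~\ref{def:isom} identifies $(2^\omega)^C$ with $2^\omega$, so $(f, Y)$ is coded by a single real $u \in V$ and $x^*$ becomes a real in $2^\omega$. Since $\Coll(\omega, <\kappa)$ is $\kappa$-c.c.\ in $L$, the name for $u$ has an antichain of size $<\kappa$, so $u \in L[H_{\gamma_0}]$ for some $\gamma_0 < \kappa$, where $H_{\gamma_0} := H \cap \Coll(\omega, <\gamma_0)$; after enlarging, we may assume $\gamma_0 \in [\delta, \kappa)$.

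The main task is to build, inside $W$, a $\Coll(\omega, <\gamma)$-generic $K$ over $L$ for some $\gamma \in (\gamma_0, \kappa)$, with both $H_{\gamma_0}$ and $x^*$ in $L[K]$; then $r = f(x^*) \in L[K]$ because $f$ is decoded from $u \in L[H_{\gamma_0}] \subseteq L[K]$. Pick $\gamma \in (\gamma_0, \kappa)$ and use the factorization $\Coll(\omega, <\gamma) \cong \Coll(\omega, <\gamma_0) \times \Coll(\omega, [\gamma_0, \gamma))$. The first factor is already realized by $H_{\gamma_0}$, and the tail $H \cap \Coll(\omega, [\gamma_0, \gamma)) \in V$ is $\Coll(\omega, [\gamma_0, \gamma))$-generic over $L[H_{\gamma_0}]$. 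In $W$, every ordinal below $\kappa = \aleph_1^V$ is countable, so $\Coll(\omega, [\gamma_0, \gamma))$ is a countable atomless poset from $W$'s viewpoint and the $L[H_{\gamma_0}]$-dense sets in $\Coll(\omega, [\gamma_0, \gamma))$ form a countable collection in $W$. Using the homogeneity of the Lévy collapse, I modify the tail generic by an automorphism driven by $x^*$ (interleaving the bits of $x^*$ into the collapse coordinates), obtaining a new $L[H_{\gamma_0}]$-generic $K'$ for $\Coll(\omega, [\gamma_0, \gamma))$ with $x^* \in L[H_{\gamma_0}][K']$. Set $K := H_{\gamma_0} \cup K'$.

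The main obstacle is the absorption step: showing that a suitable $K'$ can be constructed and verifying that it is genuinely $L[H_{\gamma_0}]$-generic. The key is the combination of two facts: in $W$, $\Coll(\omega, [\gamma_0, \gamma))$ collapses to a countable forcing, so the collection of $L[H_{\gamma_0}]$-dense sets (as seen in $W$) is countable and can be met by a diagonal construction; and the Lévy collapse is universal for absorbing a single new real via its abundant automorphism group. This is the analogue, for the Sacks-added real $x^*$, of Solovay's original argument showing every real in $V = L[H]$ lies in some $L[H_\gamma]$.
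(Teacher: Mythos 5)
Your setup agrees with the paper's: read $r$ continuously as $r=f(\bar x_{\operatorname{gen}}\restriction C)$ via Lemma~\ref{lem:crninfinite}, code $f$, $(Y,C)$ and $\Phi_Y$ by a real in $V$, and absorb that code into some $L[H_{\gamma_0}]$ with $\gamma_0\geq\delta$ using that $V$ itself is Solovay-like. The ``in particular'' clause is also handled correctly. The problem is the step you yourself flag as the main obstacle: getting $x^*$ into $L[K]$ for a collapse generic $K$ over $L$. Modifying the tail generic ``by an automorphism driven by $x^*$'' does not work. An automorphism that is not amenable to $L[H_{\gamma_0}]$ need not map $L[H_{\gamma_0}]$-dense sets to $L[H_{\gamma_0}]$-dense sets, so the image filter need not be generic; and if the automorphism \emph{is} in $L[H_{\gamma_0}]$, it does not change the model, which does not contain $x^*$ (the tail of $H$ lies in $V$, while $x^*$ is added by the Sacks iteration on top of $V$). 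More fundamentally, $x^*\in L[H_{\gamma_0}][K']$ for a set-generic $K'$ forces $x^*$ to lie in a set-generic extension of $L[H_{\gamma_0}]$, and that is exactly the nontrivial fact that has to be established --- it is not automatic for a real of an outer model (e.g.\ $0^\sharp$ cannot be absorbed into any set-generic extension of $L$). The countability in $W$ of the family of $L[H_{\gamma_0}]$-dense sets lets you build \emph{some} generic by diagonalization, but not one from which $x^*$ is recoverable inside $L[H_{\gamma_0}][K']$; the positions at which you ``interleave the bits'' of $x^*$ depend on data external to that model. Your closing analogy with Solovay's argument is also off: there the real already lies in $L[H]$ and the $\kappa$-c.c.\ does the work, whereas here $x^*$ lies outside $V$ entirely.

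The missing ingredient, which is how the paper closes this gap, is Lemma~\ref{lem:gettingmcg}: since $L[H_{\gamma_0}]$ has only countably many reals (hence countably many dense subsets of Cohen forcing) from the point of view of $V$, one can shrink $(Y,C)$ to a condition all of whose branches are mapped by $\Phi_Y$ to Cohen generics over $L[H_{\gamma_0}]$; a density argument then shows the generic filter contains such a condition with a suitable $\gamma_0\geq\delta$. Thus $c:=\Phi_Y(\bar x_{\operatorname{gen}}\restriction C)$ is genuinely Cohen generic over $L[H_{\gamma_0}]$, so $r=f(\Phi_Y^{-1}(c))\in L[H_{\gamma_0}][c]$, and the standard absorption $\Coll(\omega,<\gamma_0)\cong\Coll(\omega,<\gamma_0)\times\mathbb{C}$ produces the required $\Coll(\omega,<\gamma_0)$-generic $K\in W$ over $L$ with $r\in L[K]$. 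No encoding of bits into the collapse is needed once Cohen genericity of $x^*$ over the small model is in hand.
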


\begin{proof}
Let $r \in W$ and $\delta < \kappa$ be arbitrary. Then there is $(X,C) \in V$ and $f \colon X \to \omega^\omega$ continuous such that $r = f(\bar x \restriction C)$, where $\bar x$ is the generic added over $V$. Since $f$, $(X,C)$ and $\Phi_X$ are coded by a real and $V$ is Solovay-like, there is $\gamma \geq \delta$ and a $\Coll(\omega, < \gamma)$-generic $H_\gamma \in V$ over $L$ such that $f, (X,C), \Phi_X \in L[H_\gamma]$. There are only countably many reals in $L[H_\gamma]$ from point of view of $V$. Thus using Lemma~\ref{lem:gettingmcg}, we can force that $\Phi_X^{-1}(\bar x \restriction C)$ is Cohen generic over $L[H_\gamma]$. Working in $W$, $r \in L[H_\gamma][c]$, where $c$ is a Cohen real over $L[H_\gamma]$. As $\Coll(\omega,<\gamma) \cong \Coll(\omega,<\gamma) \times \mathbb{C}$, this proves the lemma.
\end{proof}

When we use a variable for a projective or an $L(\mathbb{R})$ set in a forcing extension, as usual what we mean is that we reinterpret the defining formula of that set in that extension.

\begin{prop}
\label{prop:propheart2}
	Assume $\CUP$ holds and let $\mathbb{P}$ be a countable support iteration of Sacks forcing. Let $E$ be a projective hypergraph on $\omega^\omega$, $\dot y$ a $\mathbb{P}$-name and $\bar p \in \mathbb{P}$, $p \Vdash \dot y \in \omega^\omega$. Then there is $\bar q \leq \bar p$ and a compact $E$-independent set $K \subseteq \omega^\omega$ such that 
	
	\begin{enumerate}
		\item either $\bar q \Vdash \dot y \in K$,
		\item or $\bar q \Vdash \{ \dot y \} \cup K \text{ is not } E\text{-independent}$.
	\end{enumerate}
If $V$ is a $\Coll(\omega, <\kappa)$-generic extension of $L$, we can change ``projective" to ``$L(\mathbb{R})$".
\end{prop}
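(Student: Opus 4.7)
My plan is to replay the proof of Proposition~\ref{prop:propheart} line by line, replacing ``analytic'' by ``projective'' (resp.\ ``$L(\mathbb{R})$''), and to upgrade the absoluteness inputs using Section~\ref{sec:genabs}. First, I would apply Lemma~\ref{lem:crninfinite} to replace $\bar p$ by a condition of the form $i_\lambda(X,C) \leq \bar p$ together with a continuous $f \colon X \to \omega^\omega$ such that $i_\lambda(X,C) \Vdash \dot y = f(\bar x_{\operatorname{gen}} \restriction C)$. Setting $\alpha := \tp(C)$, I would transport things to $(2^\omega)^\alpha$ via the homeomorphism $\Phi_X$ from Definition~\ref{def:isom}, obtaining a continuous map $g := f \circ \Phi_X^{-1}$ on $\Phi_X''X$ (extend continuously if needed) and a pullback hypergraph $E'$ on $(2^\omega)^\alpha$ given by $\{\bar z_0, \ldots, \bar z_{n-1}\} \in E' \iff \{g(\bar z_0), \ldots, g(\bar z_{n-1})\} \in E$. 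Since $g$ is continuous, $E'$ is projective (resp.\ in $L(\mathbb{R})$).

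Next, I need $E'$ to be $\omega$-universally Baire so that Lemma~\ref{lem:mainlemmainf} applies. Under $\CUP$, every projective set has the Baire property (see the discussion after Definition~\ref{def:omegauniv}), hence is $\omega$-universally Baire by Lemma~\ref{lem:bairecontpre} together with the closure properties of that pointclass on top of the analytic layer. In the inaccessible case, the hypothesis that $V$ is a $\Coll(\omega,<\kappa)$-extension of $L$ gives, by Solovay's theorem, that every set in $L(\mathbb{R})^V$ has the Baire property and hence is $\omega$-universally Baire. Applying Lemma~\ref{lem:mainlemmainf} to $E'$ (with $\alpha < \omega_1$, which holds as $C$ is countable) yields a countable model $M$ and either the mCg-independence conclusion (case~(1)) or a tuple of continuous maps $\phi_0, \ldots, \phi_{N-1}$ and some $\bar s$ witnessing case~(2).

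I then invoke Lemma~\ref{lem:gettingmcg} with this $M$ (and with $\bar s$ in case~(2)) to refine $(X,C)$ to $(Y,C) \leq (X,C)$ such that every $\bar x \in Y$ has $\Phi_X(\bar x)$ strongly mCg over $M$, with $\Phi_X''Y \subseteq [\bar s]$ in case~(2). In case~(1), $K := f''Y$ is compact and $E$-independent in $V$ by Lemma~\ref{lem:mainlemmainf}(1), and clearly $i_\lambda(Y,C) \Vdash \dot y \in K$. In case~(2), let $K$ be the compact set obtained as the union $\bigcup_{j<N} (g \circ \phi_j)''\Phi_X''Y$; Lemma~\ref{lem:mainlemmainf}(2) makes $K$ $E$-independent in $V$, while for the generic $\bar z_G := \Phi_X(\bar x_{\operatorname{gen}} \restriction C)$ the set $\{\dot y\} \cup \{g(\phi_j(\bar z_G)) : j < N\} \subseteq \{\dot y\} \cup K$ is forced to be in $E$, so $i_\lambda(Y,C) \Vdash \{\dot y\} \cup K$ is not $E$-independent.

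The main obstacle is that the previous step has established the key properties of $K$ only in $V$, whereas the conclusion demands them in the generic extension. In Proposition~\ref{prop:propheart} this transfer was free because $E$-independence of a compact set is $\mathbf\Pi^1_1$ in codes for $K$ and $E$, hence absolute by Shoenfield. For projective $E$ one instead needs $\mathbb{S}^{*\lambda}$-projective absoluteness, which is exactly Lemma~\ref{lem:absproj} under $\CUP$; for $E$ in $L(\mathbb{R})$ one needs $V \preccurlyeq_{L(\mathbb{R})} V^{\mathbb{S}^{*\lambda}}$, which is Lemma~\ref{lem:sacksissolovay} in the Solovay-like setting. Thus the entire argument hinges on the new absoluteness inputs of Section~\ref{sec:genabs} being strong enough to push both the $E$-independence of $K$ and (in case~(2)) the existence of the forbidden edge from $V$ into the extension.
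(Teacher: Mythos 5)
Your proposal is correct and follows essentially the same route as the paper: pull $E$ back to a hypergraph $E'$ on $(2^\omega)^\alpha$ via $f\circ\Phi_X^{-1}$, note that $\CUP$ (resp.\ Solovay's theorem) makes $E'$ $\omega$-universally Baire so that Lemma~\ref{lem:mainlemmainf} applies, refine via Lemma~\ref{lem:gettingmcg}, and transfer the independence/edge statements to the extension using Lemma~\ref{lem:absproj} (resp.\ $L(\mathbb{R})$-absoluteness from Lemma~\ref{lem:sacksissolovay}). You correctly isolate the two places where the new hypotheses enter, and your formula for $K$ in case (2) even fixes a small typo in the paper's version.
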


\begin{proof}[Proof of Proposition~\ref{prop:propheart2}]
\label{pf:pfofpropheart}
By Lemma~\ref{lem:crninfinite}, there is $(X,C) \in \tilde{\mathbb{S}}^{*\lambda}$ and a continuous function $f \colon X \to \omega^\omega$ such that $i_\lambda(X,C) \leq \bar p$ and $i_\lambda(X,C) \Vdash \dot y = f(\bar x_{\operatorname{gen}} \restriction C)$. Let $\alpha = \tp(C)$ and consider the hypergraph $E'$ on $(2^\omega)^\alpha$ such that $$e \in E' \leftrightarrow f''\Phi_{X}^{-1}(e) \in E.$$ $E'$ is a projective hypergraph on $(2^\omega)^\alpha$ and by Lemma~\ref{lem:mainlemmainf} there is a countable model $M$, $\alpha + 1 \subseteq M$, such that either 

	\begin{enumerate}
		\item for any $\bar x_0,\dots, \bar x_{n-1} \in (2^\omega)^\alpha$ that are strongly $\langle 2^\omega : i < \alpha \rangle$-mCg over $M$, $$\{\bar x_0,\dots, \bar x_{n-1}\} \text{ is } E' \text{-independent}$$
	\end{enumerate}
	
	or for some $N\in \omega$,
	\begin{enumerate}
		\setcounter{enumi}{1}
		\item there are $\phi_0, \dots, \phi_{N-1} \colon (2^\omega)^\alpha \to (2^\omega)^\alpha$ continuous, $\bar s \in \bigotimes_{i<\alpha} 2^{<\omega}$ so that for any $\bar x_0,\dots, \bar x_{n-1} \in (2^\omega)^\alpha \cap [\bar s]$ that are strongly mCg over $M$,  $$\{\phi_j(\bar x_i) : j < N, i<n\} \text{ is } E' \text{-independent but } \{ \bar x_0\} \cup \{ \phi_j(\bar x_0) : j < N \} \in E'.$$
		\end{enumerate}

Apply Lemma~\ref{lem:gettingmcg} to get $(Y,C) \leq (X,C)$ such that for any $\bar x_0, \dots, \bar x_{n-1} \in Y$, $$\Phi_X(\bar x_0), \dots, \bar \Phi_X(x_{n-1})$$ are strongly mCg over $M$. If (2) holds, we ensure that $\Phi_X''Y \subseteq [\bar s]$. 

Let $\bar q := i_\lambda(Y,C) \leq \bar p$. In case (1), let $K := f''Y$. Then $\bar q \Vdash \dot y \in K$. If $\bar x_0, \dots, \bar x_{n-1} \in Y$, then $\{ \Phi_X(\bar x_0), \dots, \Phi_X(\bar x_{n-1})\} \notin E'$ and thus $\{ f(\bar x_0), \dots, f(\bar x_{n-1}) \} \notin E$. Thus $K$ is $E$-independent. In case (2), let $K := \bigcup_{i < N} (\phi_i \circ \Phi_X)''Y$. By a similar argument, $K$ is $E$-independent and for every $\bar x \in Y$, $\{ f(\bar x)\} \cup K$ is not $E$-independent. By projective absoluteness, in $V^\mathbb{P}$, $\{ f(\bar x_{\operatorname{gen}}\restriction C)\} \cup K$ is not $E$-independent.

If $V$ is a $\Coll(\omega, <\kappa)$-generic extension of $L$, we can use $L(\mathbb{R})$-absoluteness and the fact the every set of reals in $L(\mathbb{R})$ is $\omega$-universally Baire to get the analogous result. 
\end{proof}

\section{\texorpdfstring{A $\Delta^1_4$ Bernstein set}{A Delta\textasciicircum 1\_2 Bernstein set}}
\label{sec:bernstein}

\begin{definition}
Let $x,y \in \omega^\omega$. Then we write $x \leq_L y$ iff $x \in L[y]$ and $x =_L y$ iff $x \leq_L y$ and $y \leq_L x$. We write $x <_L y$ iff $x \leq_L y$ and $y \not\leq_L x$. A set of the form $[x]_L := \{ y \in \omega^\omega : x =_L y\}$ is called an \emph{$L$-degree}.
\end{definition}

Note that the relations $\leq_L$ and $=_L$ are $\Sigma^1_2$ and thus absolute between inner models. 

\begin{lemma}[Folklore]\label{lem:degrees}
Let $G$ be $\mathbb{S}^{*\lambda}$-generic over $L$, where $\lambda \leq \omega_2$. Then in $L[G]$, $<_L$ is a well-order of type $\lambda$ or $\lambda +1$.
\end{lemma}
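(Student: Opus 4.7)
\emph{Plan.} The strategy is to stratify the reals of $L[G]$ by a ``stage'' at which each appears in the iteration, and then show that (i) each stage determines a single $L$-degree and (ii) higher stages give strictly larger degrees. This produces a linear, well-founded order of the right type.

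The first step is to define the rank of a real. By properness and the continuous reading of names (Lemma~\ref{lem:crninfinite}), every $r \in L[G] \cap \omega^\omega$ is a continuous function of countably many of the generics, and therefore belongs to $M_\alpha := L[\langle x_\beta : \beta < \alpha\rangle]$ for some $\alpha < \lambda$. Let $\alpha_r$ be the least such $\alpha$; this is well-defined with $\alpha_r \leq \lambda$. Writing $\alpha_r$ as $\beta_r+1$ when the rank is attained at a successor and noting that reals are only added at countable-cofinality limits, one sees that the stage is always reached by a single ``new'' generic.

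The second step is Sacks minimality applied stagewise in the iteration. At each step, the generic $x_\beta$ is Sacks generic over $M_\beta$, so the classical minimality theorem for Sacks forcing gives: for any real $r \in M_{\beta+1} \setminus M_\beta$ one has $L[r, G\cap\mathbb S^{*\beta}] = M_{\beta+1}$, i.e.\ $r$ and $x_\beta$ are interconstructible over $M_\beta$. The third step is a coding trick: for $\alpha < \omega_1$, fix once and for all in $L$ a bijection $\omega \to \alpha$ and use it to pack $\langle x_\beta : \beta < \alpha\rangle$ into a single real $z_\alpha \in L[G]$ with $L[z_\alpha] = M_\alpha$. Combining this with Sacks minimality, every real of rank $\alpha < \omega_1$ is $=_L$ to $z_{\alpha_r}$; the $z_\alpha$'s form a $<_L$-strictly increasing chain of length $\min(\omega_1,\lambda)$, and $L$ itself sits below them if one wishes to count it separately.

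For $\omega_1 \leq \alpha < \lambda$ (relevant when $\omega_1 < \lambda \leq \omega_2$) no such single-real coding exists, so here one argues directly that the generic added at stage $\alpha$, read together with the constructibility machinery, produces precisely one new $L$-degree strictly above all previous ones; iterating stage-by-stage one obtains a linear chain of $L$-degrees of length $\lambda$. Putting everything together, the $L$-degrees of reals in $L[G]$ are linearly ordered by the rank function $r \mapsto \alpha_r$ and well-founded (being indexed by an ordinal), so $<_L$ is a well-order of type $\lambda$ or $\lambda+1$ depending on whether $L\cap\omega^\omega$ contributes a base degree. The main obstacle I expect is precisely the comparability step for ranks $\alpha \geq \omega_1$: here we cannot rely on the countable coding of stage $\alpha$, and one must exploit the specific minimality properties of the Sacks iteration to ensure that a real at a higher stage still dominates, in $\leq_L$, every earlier representative.
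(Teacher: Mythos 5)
The paper does not actually prove this lemma; it cites \cite[Corollary 14]{Groszek1988} and \cite{Kanovei1999}, so your sketch should be measured against what those arguments really require. Your overall architecture (stratify reals by the first stage $\alpha_r$ at which they appear, show each stage contributes exactly one $L$-degree, and that later stages lie strictly $<_L$-above earlier ones) is the right shape, but the step you treat as routine is in fact the entire content of the lemma, and the justification you give for it does not work. Stage-wise Sacks minimality over $M_\beta$ gives only that $M_\beta[r]=M_{\beta+1}$ for every real $r\in M_{\beta+1}\setminus M_\beta$; it says nothing about $L[r]$ unless you already know $M_\beta\cap\omega^\omega\subseteq L[r]$, i.e.\ that $z_\beta\leq_L r$. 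That comparability is exactly what fails for the \emph{product}: if $(x_0,x_1)$ is $\mathbb{S}\times\mathbb{S}$-generic, then $x_1$ is Sacks generic over $L[x_0]$, so $L[x_0][r]=L[x_0][x_1]$ for every new real $r$, and yet $x_0\notin L[x_1]$ and the $L$-degrees are not linearly ordered. So "minimality over $M_\beta$ plus coding $M_\beta$ into $z_\beta$" cannot yield $r=_L z_{\beta+1}$. The linearity of the degrees is a special feature of the \emph{iteration} and is proved by a fusion argument along the whole iteration (in the notation of this paper, by shrinking a condition $(X,C)$ so that the coordinate projections become injective continuous functions of the value of $\dot y$), showing that any real not in $M_\beta$ constructs $x_\gamma$ for all $\gamma<\beta$. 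This is the hard part, it is already present at the two-step iteration, and your proposal does not contain it; you locate the "main obstacle" at ranks $\alpha\geq\omega_1$, but the obstacle is at rank $2$.

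Two further, smaller problems. First, your claim that "the stage is always reached by a single new generic" is false at limit stages of countable cofinality: there a new real appears that is not in any $M_\beta$, $\beta<\alpha$, and is not computed from any single $x_\beta$; one must separately show that all such reals share one degree, namely that of a real coding the whole sequence $\langle x_\beta:\beta<\alpha\rangle$ (again a fusion argument, and for $\alpha\geq\omega_1$ of countable cofinality one must argue with the set of reals of $M_\alpha$ rather than with a code for the sequence, which is too long to be a real). Second, the dichotomy "$\lambda$ versus $\lambda+1$" is not governed by whether $L\cap\omega^\omega$ "contributes a base degree" (it always does); it is governed by whether a top degree appears at stage $\lambda$ itself, i.e.\ by whether $\cf(\lambda)=\omega$ (or $\lambda$ is a successor). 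These are repairable, but as written the proposal assumes the theorem it is meant to prove.
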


\begin{proof}
See for example \cite[Corollary 14]{Groszek1988} or \cite{Kanovei1999}.
\end{proof}

\begin{thm}\label{thm:bernstein}
Let $M$ be a symmetric extension of $L$ as constructed in Section~\ref{sec:symit} with $\lambda \leq \omega_1$. Then there is a $\Delta^1_4$ Bernstein set in $M$. 
\end{thm}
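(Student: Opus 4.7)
The strategy is to carry out the classical transfinite construction of a Bernstein set using a definable enumeration of perfect sets coming from the well-ordering of $L$-degrees in Lemma~\ref{lem:degrees}, and to verify that the resulting set is $\Delta^1_4$.

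By Lemma~\ref{lem:degrees}, in $L[G]$ (and hence in $M$, since the two share the same reals) the $L$-degrees of reals are linearly well-ordered by $<_L$ of order type at most $\omega_1 + 1$. The first step is a density lemma: every perfect set $P \subseteq \omega^\omega$ in $M$ meets $L$-degrees of $<_L$-cofinal rank. Given a code $c \in L[G_{\alpha_0}]$ for the perfect tree of $P$, a canonical homeomorphism $h \in L[c]$ from $2^\omega$ onto the set coded by $c$ applied to the Sacks generic $x_\beta$ for $\beta \geq \alpha_0$ yields $h(x_\beta) \in P$ whose $L$-degrees cover cofinally many $<_L$-positions as $\beta$ varies.

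Next, I enumerate codes of perfect sets as $\langle c_\gamma : \gamma < \omega_1 \rangle$ by first sorting by the $<_L$-rank of the degree of the code, and within each degree $d$ by the canonical $L[x]$-well-order of reals (for any $x \in d$). I construct $\langle (a_\gamma, b_\gamma, e_\gamma) : \gamma < \omega_1 \rangle$ by transfinite recursion: at stage $\gamma$, let $a_\gamma$ be the $L[\cdot]$-canonically least real in $P_\gamma$ whose $L$-degree is fresh (not equal to $[a_\delta]_L$, $[b_\delta]_L$, or $[e_\delta]_L$ for any $\delta < \gamma$); let $b_\gamma$ be an analogous canonical choice in another fresh degree; and let $e_\gamma$ be the $L[\cdot]$-canonically least real of the $<_L$-least $L$-degree not yet touched, which ensures that every $L$-degree is eventually touched. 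The density lemma guarantees that the freshness requirements for $a_\gamma$ and $b_\gamma$ are always satisfiable. Setting $B := \{a_\gamma : \gamma < \omega_1\}$, every $P_\gamma$ meets $B$ at $a_\gamma$ and meets $\mathbb{R}\setminus B$ at $b_\gamma$, so $B$ is Bernstein.

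For the complexity bound, $<_L$ is $\Sigma^1_2$ and the within-degree $L[x]$-order is uniformly $\Delta^1_2(x)$; the canonicity condition at a single stage involves a universal quantifier over $L$-degrees nested inside $\Sigma^1_2$ matrices, placing it in $\Pi^1_3$, and conjunction over countably many stages preserves this. Then ``$x \in B$'' becomes ``$\exists s\ (s$ codes a correct initial segment of the recursion with $x$ appearing as some $a_\gamma)$'', a $\Sigma^1_4$ statement. Since every $L$-degree is eventually touched by the recursion, ``$x \in \mathbb{R}\setminus B$'' can also be written as a $\Sigma^1_4$ disjunction, namely ``$\exists \gamma\ ([x]_L = [b_\gamma]_L$ or $[x]_L = [e_\gamma]_L$, or $[x]_L = [a_\gamma]_L$ with $x \neq a_\gamma)$''; each clause admits a $\Sigma^1_4$ description through the initial-segment witness. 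Hence $B \in \Delta^1_4$. The principal obstacles are proving the density lemma, which relies on Sacks indestructibility of the $L$-degree structure past any initial stage together with the uniform coding of canonical homeomorphisms, and carefully pinning down the complexity of the recursion; the extra $\forall$-over-degrees quantifier is what pushes the bound to $\Delta^1_4$, compared to the $\Delta^1_3$ achievable in \cite{KanoveiSchindler} where a global $\Delta^1_3$ well-ordering of reals was available.
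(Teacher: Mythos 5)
Your proposal founders on the point that is the whole raison d'\^etre of the model $M$: there is no well-ordering of $\mathbb{R}$ in $M$ (Lemma~\ref{lem:symit}(4)), and your construction requires one. Two steps fail concretely. First, the enumeration $\langle c_\gamma : \gamma < \omega_1\rangle$ of all perfect-set codes, sorted ``within each degree $d$ by the canonical $L[x]$-well-order of reals (for any $x \in d$)'', is not well-defined: the canonical well-order of $L[x]$ orders reals by their construction \emph{from the parameter $x$}, so distinct representatives $x =_L y$ of the same degree in general induce different orders on that degree. No representative-independent choice exists, and none can exist: a uniformly definable well-order of each degree, concatenated along the $<_L$-well-order of the degrees, would be a well-ordering of $\mathbb{R}^M$ definable in $M$, contradicting Lemma~\ref{lem:symit}(4); and selecting one representative per degree is exactly the kind of choice that fails in $M$ (cf.\ Lemma~\ref{lem:symit}(6)). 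Second, even granting an enumeration, your recursion makes $\lambda$-many selections of reals $a_\gamma, b_\gamma, e_\gamma$; carried out in the ambient $\ZFC$ model $L[G]$ this produces a set with no reason to be (hereditarily) symmetric, hence no reason to lie in $M$, and the complexity computation would then be taking place in the wrong model. (A side remark: for $\lambda$ of countable cofinality $M$ and $L[G]$ do \emph{not} have the same reals, since a real coding the whole generic appears at stage $\lambda$ but is not symmetric.) Your density observation --- every perfect set meets cofinally many $L$-degrees --- is correct and is used in the paper, but it cannot rescue the selection steps.

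The paper's proof avoids all selection. Since $\WO(\mathbb{R})$ fails in $M$, there is no maximal $L$-degree, so the order type $\kappa$ of $<_L$ on the degrees of $M$ is a limit ordinal, and one simply sets $B := \bigcup_{\alpha<\kappa} d_{2\alpha}$, the union of the even-indexed degrees. A perfect tree $T$ is coded in some degree $d_\beta$, and $[T]$ acquires new elements in every degree above $d_\beta$, hence meets both $B$ and its complement. Membership of $y$ in $B$ is then expressed by asserting the existence (giving $\Sigma^1_4$), respectively the universality (giving $\Pi^1_4$), of a countable sequence $\langle y_\xi : \xi \le \beta\rangle$ witnessing that $y$ lies in the $\beta$-th degree with $\beta$ odd; no enumeration of perfect sets and no canonical choices enter the definition. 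This parity trick is the idea your proposal is missing.
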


\begin{proof}
By Lemma~\ref{lem:symit}, there is no well-ordering of the continuum in $M$. Thus there is no maximal $L$-degree in $M$ and the order type of $<_L$ in $M$ is a limit ordinal $\kappa \leq \omega_1$.\footnote{In fact, $\kappa = \lambda$.} Consider the sequence $\langle d_\alpha : \alpha < \kappa \rangle$ of $L$-degrees in $M$ ordered by $<_L$. We claim that $B := \bigcup_{\alpha < \kappa} d_{2\alpha}$ is a Bernstein set. Namely, if $T \subseteq \omega^{<\omega}$ is a perfect tree, then $[T]_L = d_\beta$ for some $\beta < \kappa$. But then new elements of $[T]$ exist in any degree above $d_\beta$.\footnote{Whenever $X$ is any perfect set, new elements are added to it in any extension with new reals.} In particular, we are missing out all the elements of $[T]$ in odd degrees but including all those in even degrees. Thus $[T]$ is included neither in $B$ nor its complement.  

Let us show that $B$ is $\Delta^1_4$. In order to express that $y$ is in the $\beta$'th degree it suffices to say that there is a sequence $\langle y_\xi : \xi \leq \beta \rangle$ of reals such that 

\begin{enumerate}
    \item $0 =_L y_0$, 
    \item for every $\xi \leq \beta$, for every $z \leq_L y_\xi$, $\exists \xi' \leq \xi (z =_L y_\xi)$, 
    \item and $y =_L y_\beta$. 
\end{enumerate}

Thus $y \in B$ iff there is an odd countable ordinal $\beta$, and a sequence $\langle y_\xi : \xi \leq \beta \rangle$ such that (1), (2) and (3) are satisfied. Using standard coding techniques (e.g. to talk about countable ordinals) we see that this is expressed by a $\Sigma^1_4$ formula. 
On the other hand, $y \in B$ iff for any countable ordinal $\beta$ and any sequence $\langle y_\xi : \xi \leq \beta \rangle$ such that (1), (2) and (3), $\beta$ is odd.
This is expressed by a $\Pi^1_4$ formula. Thus $B$ is $\Delta^1_4$.
\end{proof}

\section{Main results}

\begin{thm}
\label{thm:mainit}
Let $V \models \CUP \wedge \GCH$, $\lambda$ be an ordinal and let $M$ be the symmetric extension of $V$ from Section~\ref{sec:symit}. Then the following hold in $M$: 

\begin{enumerate}
    \item $\DC_{\omega_1} + \neg \WO(\mathbb{R})$, if $\lambda = \omega_2$.
    \item $\neg \AC_\omega(\mathbb{R})$, if $\lambda = \omega$.
    \item Every projective hypergraph on the reals has a maximal independent set that is a union of $\aleph_1$-many compact sets. 
    \item In particular, every projective set is the union of $\aleph_1$-many compact sets. 
    \item There is a Bernstein set.
    \item There is a tower, a scale, a Luzin set, a Sierpiński set and a P-point.
    \item There is a two-point set.
\end{enumerate}

If $V$ is a $\Coll(\omega, <\kappa)$-generic extension of $L$, where $\kappa$ is inaccessible in $L$, we can strengthen ``projective" to $``L(\mathbb{R})"$ and every equivalence relation in $L(\mathbb{R})$ has a transversal in $M$.
\end{thm}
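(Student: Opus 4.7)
My plan is to handle each clause in turn, with clause~(3) carrying the main technical load. Clauses~(1) and~(2) are direct applications of Lemma~\ref{lem:symit}: parts~(2) and~(4) of that lemma give $\DC_{\omega_1} + \neg\WO(\mathbb{R})$ when $\lambda = \omega_2$, while part~(6) gives $\neg\AC_\omega(\mathbb{R})$ when $\lambda = \omega$. Clause~(5) is Theorem~\ref{thm:bernstein} for $\lambda \leq \omega_1$; for $\lambda = \omega_2$ the same argument applies since $<_L$ still well-orders the $L$-degrees of $M$ in a limit order type above $\omega_1$, so alternating $L$-degrees still produces a Bernstein set.

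For clause~(3), I would work inside $M$, fix a projective hypergraph $E$, and absorb its real parameters into the ground model by passing to $V[\langle x_\beta : \beta < \alpha_0 \rangle]$ for a suitable $\alpha_0 < \lambda$. I would then construct by recursion of length $\omega_1$ a sequence $\langle K_\alpha : \alpha < \omega_1 \rangle$ of compact $E$-independent sets with $K_\alpha \supseteq \bigcup_{\beta < \alpha} K_\beta$, whose union $A$ is maximal $E$-independent. The engine is Proposition~\ref{prop:propheart2}: at stage $\alpha$ I would enumerate the relevant $(\dot y, \bar p)$ pairs and choose $K_\alpha$ so that, for each such pair, there is $\bar q \leq \bar p$ either forcing $\dot y \in K_\alpha$ or forcing $\{\dot y\} \cup K_\alpha$ to be non-$E$-independent. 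The recursion is legal in $M$ by $\DC_{\omega_1}$ when $\lambda = \omega_2$, and is performed directly in $V$ when $\lambda = \omega$; to land in $M$, each $\dot K_\alpha$ is arranged to be $H_{\beta(\alpha)}$-symmetric for some $\beta(\alpha) < \lambda$, and the standard name for $\langle \dot K_\alpha : \alpha < \omega_1 \rangle$ is then hereditarily symmetric by the closure argument of Lemma~\ref{lem:symit}(1). The main obstacle will be ensuring that $\bigcup_\alpha K_\alpha$ is genuinely $E$-independent, not merely a union of $E$-independent compacts; this requires strengthening Proposition~\ref{prop:propheart2} so that the compact it returns can be taken compatibly with any previously fixed compact, which is obtained by running the Cohen-mutual-genericity argument of Lemma~\ref{lem:mainlemmainf} relative to a countable model that already records the earlier stages. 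Clause~(4) follows from~(3) applied to the hypergraph $\{\{x\} : x \notin P\}$ for a projective set $P$.

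For the remaining clauses: a two-point set in~(7) follows from~(3) applied to the Borel hypergraph of collinear triples on $\mathbb{R}^2$, via a standard cardinal-arithmetic argument using that $\mathbb{R}$ is not well-orderable in $M$ (so every line has more than $\aleph_1$ points while at most $\aleph_1$ lines pass through pairs of points of the candidate set). For~(6), Luzin and Sierpiński sets constructed in $V = L$ transfer to $M$ via standard preservation theorems for Sacks iterations, while towers, scales, and P-points can be built in $V$ under $\CH$ by the usual diagonalization and survive into $M$ because their natural names are hereditarily symmetric with supports of size $\aleph_1$. For the inaccessible-cardinal strengthening, the argument in~(3) goes through with ``projective'' replaced by ``$L(\mathbb{R})$'', using the $L(\mathbb{R})$-clause of Proposition~\ref{prop:propheart2} together with Lemma~\ref{lem:sacksissolovay}; a transversal for an equivalence relation $F$ in $L(\mathbb{R})$ is then obtained as a maximal independent set for the hypergraph of distinct same-class pairs of $F$.
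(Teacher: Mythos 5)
Your overall architecture for clause~(3) (a length-$\omega_1$ recursion of compacta driven by Proposition~\ref{prop:propheart2}) matches the paper's, but three of your choices create real gaps. First, parameter handling: you absorb the real parameter of $E$ into $V[\langle x_\beta : \beta < \alpha_0\rangle]$ and would then have to run Proposition~\ref{prop:propheart2} over that model, which requires $\CUP$ to hold \emph{there}; nothing shows $\CUP$ is preserved by an initial segment of the Sacks iteration, and the paper deliberately avoids this by running the entire construction in $V$ over a single universal lightface $\Sigma^1_n$ hypergraph on $\omega^\omega\times\omega^\omega$ coded in $V$, whose sections then handle every real parameter arising at any stage. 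Second, bookkeeping: for $\lambda=\omega_2$ there are $\aleph_2$ many names/conditions, so ``enumerate the relevant $(\dot y,\bar p)$ pairs'' in $\omega_1$ steps is not possible without the paper's reduction (via continuous reading of names and the observation that the construction of $K$ does not depend on the location of the support) to the $\aleph_1$ many triples $(X,C,f)$ with $(X,C)\in\tilde{\mathbb{S}}^{*\omega_1}$, which uses $\CH$ in $V$. Third, there is no need for $\DC_{\omega_1}$ in $M$ or for symmetric names for the $K_\alpha$: the sequence $\langle K_\alpha : \alpha<\omega_1\rangle$ is built entirely in $V$ and consists of $V$-coded compact sets, so it lies in $M$ for free; note also that $K_\alpha\supseteq\bigcup_{\beta<\alpha}K_\beta$ is impossible for compact $K_\alpha$ at limit $\alpha$. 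The correct mechanism for keeping the union independent is the paper's derived hypergraph $E_\alpha$, which folds $\bigcup_{\beta<\alpha}K_\beta$ (a $\sigma$-compact set coded in $V$) into the edge relation as a parameter, so that the unmodified Proposition~\ref{prop:propheart2} applies; this is the precise form of the ``strengthening'' you only gesture at.

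Several of the remaining clauses are also off. For (5), your appeal to Theorem~\ref{thm:bernstein} and $L$-degrees requires $V=L$ (Lemma~\ref{lem:degrees} is about generics over $L$); here $V\models\CUP$, so $V\neq L$ and $\leq_L$ need not prewellorder the reals of $M$ at all. The paper instead takes $B=\bigcup_{\alpha<\lambda}R_{2\alpha+1}\setminus R_{2\alpha}$ using the sequence $\langle R_\alpha\rangle\in M$ from Lemma~\ref{lem:symit}(5). For (7), a maximal independent set for the collinear-triples hypergraph meets every line in at most two points but may miss lines entirely (maximality only guarantees that every point off the set lies on \emph{some} secant), so a two-point set does not follow from (3); the paper cites \cite{BeriashviliSchindler} for this. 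Finally, the transversal clause concerns equivalence relations on arbitrary sets in $L(\mathbb{R})$, not just on $\mathbb{R}$; your reduction to the hypergraph of same-class pairs only covers relations on the reals, and the paper needs the additional step of representing the elements of an arbitrary $X\in L(\mathbb{R})$ by definitions from reals and ordinals and assembling the transversal from the section-wise maximal independent sets.
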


\begin{proof}
(1) and (2) is Lemma~\ref{lem:symit}.

(3) First let us note that for every $n \in \omega$ there is a universal lightface $\Sigma^1_n$ hypergraph $E$ on $\omega^\omega\times \omega^\omega$ coded in $V$. More precisely, for every $r \in \omega^\omega$, there is a hypergraph $E_r$ on $\{ r \} \times \omega^\omega$ such that $E = \bigcup_{r \in \omega^\omega} E_r$, and for every $\Sigma^1_n$ hypergraph $E'$ on $\omega^\omega$ there is some $r$ such that $e \in E'$ iff $\{r\} \times e \in E_r$. It is clear that it is sufficient and necessary to produce a maximal independent set for $E$. Moreover, for simplicity we can assume that $E$ lives on $\omega^\omega$.  

Working in $V$, let $\langle X_\alpha, C_\alpha, f_\alpha : \alpha < \omega_1 \rangle$ enumerate all triples $(X, C, f)$, such that $(X,C) \in \tilde{\mathbb{S}}^{*\omega_1}$ and $f \colon X \to \omega^\omega$ is continuous. We will construct a sequence of compact sets $\langle K_\alpha : \alpha < \omega_1 \rangle \in V$ recursively such that for every $\alpha < \omega_1$, $\bigcup_{\beta < \alpha} K_\beta$ is $E$-independent. Suppose that $\langle K_\beta : \beta < \alpha \rangle$ is given. Then consider the hypergraph $E_\alpha$ given by $$e \in E_\alpha \leftrightarrow e \cup \bigcup_{\beta < \alpha} K_\beta \text{ is } E\text{-independent}.$$

Then $E_\alpha$ is projective and we can apply Proposition~\ref{prop:propheart2} to $\bar p_\alpha := i(X_\alpha,C_\alpha)$ and a name $\dot y$ for $f_\alpha(\bar x_{\operatorname{gen}} \restriction C)$, where $i$ is the dense embedding $i_{\omega_1}$ from Lemma~\ref{lem:crninfinite}. Thus there is $\bar q_\alpha \leq \bar p_\alpha$ and a compact $E_\alpha$-independent set $K$ such that either 

$$\bar q_\alpha \Vdash \dot y \in K,$$
or $$\bar q_\alpha \Vdash \{ \dot y \} \cup K \text{ is not } E\text{-independent}.$$

We let $K_\alpha := K$. Since $K_\alpha$ is $E_\alpha$-independent, $\bigcup_{\beta \leq \alpha} K_\alpha$ is indeed $E$-independent. Moreover this holds true in $V^{\mathbb{P}}$ by Lemma~\ref{lem:absproj}, where $\mathbb{P} = \mathbb{S}^{*\lambda}$. Let us check that in $V^\mathbb{P}$, $X := \bigcup_{\alpha < \omega_1} K_\alpha$ is maximal. To this end, let $\dot y$ be a name for a real, $\bar p \in \mathbb{P}$ and assume that $$\bar p \Vdash \dot y \notin X \wedge \{ \dot y \} \cup X \text{ is } E \text{-independent}.$$ 

It is sufficient to assume that $\bar p \in \mathbb{S}^{*\omega_1}$ and that $\dot y$ is an $\mathbb{S}^{*\omega_1}$-name.\footnote{Specifically, note in the proof of Proposition~\ref{prop:propheart2}, that the construction of $K$ does not depend at all on the domain of $X$ and $C$.} By Lemma~\ref{lem:crninfinite}, there is $\alpha < \omega_1$, such that $\bar p_\alpha \leq \bar p$ and $\bar p_\alpha \Vdash \dot y = f_\alpha(\bar x_{\operatorname{gen}} \restriction C_\alpha)$. But then $\bar q_\alpha \leq \bar p_\alpha \leq \bar p$ and $\bar q_\alpha \Vdash \dot y \in K_\alpha \subseteq X$ or $\bar q_\alpha\Vdash ``\{\dot y \} \cup X$ is not $E$-independent". Both options yield a contradiction. Finally, note that projective absoluteness holds between $M$ and $V^{\mathbb{P}}$. Namely, $\mathbb{R} \cap M = \bigcup_{\beta < \lambda} \mathbb{R} \cap V^{\mathbb{S}^{*\beta}}$ and projective absoluteness holds between $V^{\mathbb{S}^{*\beta}}$ and $V^{\mathbb{P}}$, for every $\beta < \lambda$. Thus $\bigcup_{\alpha < \omega_1} K_\alpha$ is maximal $E$-independent in $M$.

(4): For a projective set $X$, consider the hypergraph $E := \{ \{x \} : x \notin X \}$.

(5): Consider the sequence $\langle R_\alpha : \alpha < \lambda\rangle$ from Lemma~\ref{lem:symit}, where $R_\alpha$ is the set of reals added up to the $\alpha$'th stage of the iteration. We claim that $B := \bigcup_{\alpha < \lambda} R_{2\alpha +1} \setminus R_{2\alpha}$ is a Bernstein set. Namely, if $T \subseteq \omega^{<\omega}$ is a perfect tree, then $T$ is added at some stage $\beta$. But then new elements of $[T]$ are added at each stage after $\beta$. In particular, we are missing out all the elements added later at even stages and including all those added at odd stages. Thus $[T]$ is included neither in $B$ nor its complement.  

(6): This follows from well-known preservation results for Sacks forcing. See e.g. \cite[Corollary 3.4]{brendle2018model} for Luzin and Sierpiński sets. 

(7): This follows from \cite{BeriashviliSchindler} and Lemma~\ref{lem:symit}.

Now assume that $V$ is a $\Coll(\omega, <\kappa)$-generic extension of $L$. Then note that $M$ is Solovay-like since every real in $M$ is contained in a Solovay-like $W \subseteq M$, by Lemma~\ref{lem:sacksissolovay}. Thus we have appropriate absoluteness and we can use a similar construction as in (3). This time, for every formula $\varphi(x,y,\bar \alpha)$, we produce a set $X\subseteq (\omega^\omega)^2$ such that a vertical section $X_r$, for $r \in \omega^\omega$, is maximal independent for the hypergraph defined by the formula $\varphi(x,r,\bar \alpha)$.

Whenever $E \in L(\mathbb{R})$ is an equivalence relation on a set $X \in L(\mathbb{R})$, there is $\alpha$ such that $E$, $X$ and all elements of $X$ are definable in $W_\alpha := (V_\alpha)^{L(\mathbb{R})}$ from reals and ordinals. For any formula $\varphi$ and a finite sequence of ordinals $\bar \beta$ below $\alpha$, we find a transversal $X_{\varphi, \bar \beta}$ for the equivalence relation $$E_{\varphi, \bar \beta} := \{ (x,y) \in (\omega^\omega)^2 : x = y \vee \left(\varphi(W_\alpha,x,\bar \beta), \varphi(W_\alpha,y,\bar \beta)\right) \in E \},$$ where $\varphi(W_\alpha,w,\bar \beta) := \{z \in W_\alpha : W_\alpha \models \varphi(z,w,\bar \beta) \}$.\footnote{It might happen that $\varphi(W_\alpha,x,\bar \beta)$ or $\varphi(W_\alpha,y,\bar \beta)$ are not elements of $X$ which is why we add the clause $x=y$ to ensure that $E_{\varphi, \bar \beta}$ is reflexive.} Moreover we can assume that the map that sends a pair $(\varphi, \bar \beta)$ to $X_{\varphi, \bar \beta}$ is in $M$, since the definitions of the sets $X_{\varphi, \bar \beta}$ can be chosen uniformly in $V$. Let $\langle (\varphi_{\xi}, \bar \beta_\xi) : \xi < \mu \rangle$ be an enumeration in $M$ of all pairs $(\varphi, \bar \beta)$. Then we let \begin{multline*}
    Y := \{ y \in X : \exists \xi < \mu \exists x \in X_{\varphi_\xi, \bar \beta_\xi}( y = \varphi(W_\alpha,x, \bar \beta) ) \\ \wedge \forall \xi' < \xi \forall x \in X_{\varphi_{\xi'}, \bar \beta_{\xi'}} \left( (\varphi(W_\alpha,x,\bar \beta), \varphi(W_\alpha,y,\bar \beta)) \notin E \right) \}.  
\end{multline*}

It is clear that $Y$ is an $E$-transversal. 
\end{proof}

\begin{remark}
In general, when $\AC_\omega(\mathbb{R})$ fails, ``projective" and $\mathbf{\Sigma}^1_{\omega}$ need not be equivalent. Nevertheless, in the models of Theorem~\ref{thm:mainit} this holds. Namely, when $X \subseteq \omega^\omega \times \omega^\omega$ is a universal $\Sigma^1_1$ set, the equivalence relation $x \sim y$ iff $X_x = X_y$ is $\Pi^1_2$ and has a transversal. Thus whenever $\langle B_n : n \in \omega \rangle$ is a sequence of $\Delta^1_1$ sets, we can find $\langle x_n : n \in \omega \rangle$ such that $B_n = X_{x_n}$ for every $n \in \omega$. Thus $\bigcup_{n \in \omega} B_n$ is $\mathbf{\Sigma^1_1}$ and similarly it is $\mathbf{\Pi}^1_1$, so $\mathbf{\Delta}^1_1$. This shows that the Borel sets (the smallest $\sigma$-algebra containing the open sets) are $\mathbf{\Delta}^1_1$. In particular, projective sets are $\mathbf{\Sigma}^1_\omega$. 
\end{remark}

\begin{thm}
Let $\lambda \leq \omega_1$ and $M$ be the symmetric extension of $V=L$ from Section~\ref{sec:symit}. Then the following hold in $M$: 
\begin{enumerate}
    \item Every $\Sigma^1_1(r)$ hypergraph has a $\Delta^1_2(r)$ maximal independent set.
    \item In particular, there is a $\Delta^1_2$ Hamel basis, transcendence basis, Vitali set, etc. 
    \item There is a $\Pi^1_1$ tower and scale.
    \item There is a $\Delta^1_2$ Luzin set, Sierpiński set and P-point.
    \item There is a $\Delta^1_4$ Bernstein set.
    \end{enumerate}
\end{thm}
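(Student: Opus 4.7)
The plan is to execute the construction of Theorem~\ref{thm:mainit}(3) inside the ground model $L$, exploiting the canonical $\Sigma^1_2$-good well-ordering $<_L$ of the reals to make every choice in the recursion canonical. For part (1) the hypergraph $E$ is $\Sigma^1_1(r)$, so $\CUP$ is unnecessary: Shoenfield absoluteness together with the analytic version of Proposition~\ref{prop:propheart} suffice. Working in $L$, I enumerate along $<_L$ all triples $(X_\alpha,C_\alpha,f_\alpha)$ in $\tilde{\mathbb S}^{*\lambda}$ with a continuous reading, and at stage $\alpha$ take $K_\alpha$ to be the $<_L$-least compact $E_\alpha$-independent set provided by Proposition~\ref{prop:propheart}, where $E_\alpha$ encodes ``independence with $\bigcup_{\beta<\alpha} K_\beta$''. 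The resulting sequence $\langle K_\alpha : \alpha < \omega_1\rangle$ lies in $L$, and by the argument of Theorem~\ref{thm:mainit}(3) the set $X_r := \bigcup_{\alpha < \omega_1} K_\alpha$ is a maximal $E$-independent set in $M$; uniformity in $r$ is automatic as the whole recursion is definable from $r$ alone.

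The heart of the argument is verifying $X_r$ is $\Delta^1_2(r)$. Membership $x \in X_r$ is equivalent to the existence of a countable ordinal $\alpha$ and a countable transitive model $N$ of a sufficient fragment of $\ZFC+V=L$ which contains $\alpha,r$ and computes the initial segment $\langle K_\beta : \beta \leq \alpha \rangle$ of the canonical construction with $x \in K_\alpha$. By condensation in $L$, any two such $N$'s agree on this initial segment, so the same equivalence delivers a $\Pi^1_2(r)$ formulation (no such $N$ places $x$ in any $K_\alpha$). This is the standard promotion of a $\Sigma^1_2$-well-ordered construction to $\Delta^1_2$ complexity. Part (2) is then immediate, since Hamel bases, transcendence bases, and Vitali sets are maximal independent sets for explicit Borel hypergraphs.

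For (3) and (4) I would invoke classical $L$-constructions: a $\Pi^1_1$ tower and a $\Pi^1_1$ dominating scale of length $\omega_1^L = \omega_1^M$ built by diagonalising against a $\Sigma^1_1$ enumeration along $<_L$, and $\Delta^1_2$ Luzin and Sierpi\'nski sets together with a $\Delta^1_2$ P-point obtained in the same fashion. The countable support Sacks iteration preserves each of these objects (Luzin/Sierpi\'nski via the Sacks property, cf.\ \cite[Corollary~3.4]{brendle2018model}; P-points by Baumgartner--Laver-style arguments; towers and scales by the bounding properties of Sacks forcing), and since $M$ contains $\mathbb{R}\cap V[G]$ for every proper initial segment of the iteration, these sets remain defined by the same $\Pi^1_1$ or $\Delta^1_2$ formulas in $M$. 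Part (5) is Theorem~\ref{thm:bernstein}.

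The main obstacle is the $\Delta^1_2$ book-keeping in part (1): one must carefully arrange that the canonical choice of $K_\alpha$ depends only on reals and countable ordinals, and that condensation renders the relevant countable transitive witnesses unique, so that the existential and universal quantifiers over such witnesses collapse to give matching $\Sigma^1_2(r)$ and $\Pi^1_2(r)$ descriptions. Everything else is either a direct appeal to well-known $L$-constructions or a standard Sacks-preservation result.
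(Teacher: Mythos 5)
Your argument is correct in substance, but it takes a longer route than the paper for part (1): the paper simply cites \cite[Theorem 5.1]{Schilhan2020} for the existence of a $\Delta^1_2(r)$ maximal independent set in the full extension $V^{\mathbb{P}}$ (and, when $\cf(\lambda)=\omega$, adds a Shoenfield-absoluteness step to pull maximality down to $M$, which has fewer reals; when $\lambda=\omega_1$ no transfer is needed since $M$ and $V^{\mathbb{P}}$ then have the same reals). What you do is essentially reconstruct the proof of that cited theorem --- the $<_L$-canonical recursion over the triples $(X_\alpha,C_\alpha,f_\alpha)$, with condensation over countable transitive $N\models V=L$ collapsing the $\Sigma^1_2$ and $\Pi^1_2$ descriptions --- which is indeed the construction underlying both the citation and Theorem~\ref{thm:mainit}(3). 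This buys you a self-contained argument at the cost of redoing the definability bookkeeping. One point you should tighten: the parameter $r$ ranges over reals of $M$, most of which are not in $L$, so the recursion cannot literally be ``definable from $r$ alone'' while working in $L$. The correct move (implicit in your notation $X_r$, and explicit in the proof of Theorem~\ref{thm:mainit}(3)) is to run the construction once, parameter-free in $L$, on the universal lightface $\Sigma^1_1$ hypergraph on $\omega^\omega\times\omega^\omega$ and then take vertical sections; uniformity in $r$ then comes for free. Also note that the statements your countable models $N$ must compute correctly (the $<_L$-least compact $K$ admitting a suitable $\bar q$) are about the size-$\aleph_1$ forcing $\tilde{\mathbb{S}}^{*\lambda}$, so you need to observe that, via the continuous reading of names, they reduce to $\Sigma^1_2$/$\Pi^1_2$ assertions about reals before condensation applies. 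Your treatment of (3)--(5) matches the paper's (classical $L$-constructions plus Sacks preservation, and Theorem~\ref{thm:bernstein} for the Bernstein set).
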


\begin{proof}
(1): When $\lambda = \omega_1$, then $M$ has the same reals as $V^\mathbb{P}$ and this follows directly from \cite[Theorem 5.1]{Schilhan2020}. If $\cf(\lambda) = \omega$ and $E$ is $\Sigma^1_1(r)$, $r \in M$, then there is a $\Delta^1_2(r)$ maximal $E$-independent set in $V^\mathbb{P}$ according to \cite[Theorem 5.1]{Schilhan2020}. By Shoenfield-absoluteness, the same definition yields a maximal independent set in $M$. (3)-(4): There are such sets in $L$ and their properties are preserved. (5): This is Theorem~\ref{thm:bernstein}.
\end{proof} 

\begin{remark}
Assuming suitable large cardinals exist, one can also prove level by level versions of our results. For example, when $x^\sharp$ exists for every real $x$, the least inner model $W$ that is closed under sharps has a $\Sigma^1_3$-good well-order of the reals of length $\omega_1$ (see e.g. \cite[Fact 2.109]{BagariaWoodin1997}). Also, the existence of sharps implies that $\mathbf\Sigma^1_2$ sets have the Baire property in $W$ and by Kondo-uniformization $\CU(\mathbf{\Sigma}^1_2)$ follows. Modifying the results where appropriate we consistently obtain that every $\mathbf{\Sigma}^1_2$ hypergraph has a $\mathbf\Delta^1_3$ maximal independent set, there is a $\Delta^1_5$ Bernstein set and there is no well-ordering of the continuum. 
\end{remark}

\section{Open problems}

\begin{quest}
In Theorem~\ref{thm:main1}, can we replace (6) by ``every hypergraph in $L(\mathbb{R})$ has a maximal set"? Do we need the inaccessible? 
\end{quest}

\begin{quest}
In Theorem~\ref{thm:main2}, can we lower the complexity of a Bernstein set to $\Delta^1_3$?
\end{quest}

How much further can we push the existence of maximal sets without inducing well-orders? Consider the following principle. $\HYP(\mathbb{R})$ is saying that every hypergraph on $\mathbb{R}$ has a maximal independent set. 

\begin{quest}
Is $\HYP(\mathbb{R}) + \neg \WO(\mathbb{R})$ consistent? What if we add $\DC$?
\end{quest}

It is unlikely that our approach will work here. We highly rely on that the hypergraphs in consideration have the Baire property, but at the same time maximal sets are typically highly unregular (e.g. a Vitali set) and it is unclear how to then deal with hypergraphs constructed from such sets.

A weakening of $\HYP(\mathbb{R})$ is the \emph{continuum splitting property} that we denote by $\SP(\mathbb{R})$.\footnote{We also like to call it ``selector principle".} It is saying that every equivalence relation on $\mathbb{R}$ has a transversal. Note that $\SP(\mathbb{R})$ implies $\AC_\omega(\mathbb{R})$. In particular, $\aleph_1$ must be regular. Also, it implies that there is a set of reals of size $\aleph_1$. To our knowledge it is also still unknown whether $\SP(\mathbb{R}) + \DC + \neg \WO(\mathbb{R})$ is consistent.

\providecommand{\bysame}{\leavevmode\hbox to3em{\hrulefill}\thinspace}
\providecommand{\MR}{\relax\ifhmode\unskip\space\fi MR }
\providecommand{\MRhref}[2]{%
  \href{http://www.ams.org/mathscinet-getitem?mr=#1}{#2}
}
\providecommand{\href}[2]{#2}

\end{document}